\newtheorem{theorem}{Theorem}
\newtheorem{proposition}[theorem]{Proposition}
\newtheorem{lemma}[theorem]{Lemma}
\newtheorem*{mainthmi}{Theorem I}
\newtheorem*{mainthmiia}{Theorem II A}
\newtheorem*{mainthmiib}{Theorem II B}
\title{Irreducible completely pointed modules of quantum groups of type $A$}
\author[V. Futorny]{Vyacheslav Futorny$_\MakeLowercase{a}$}
\author[J. Hartwig]{Jonas Hartwig$_\MakeLowercase{b}$}
\author[E. Wilson]{Evan Wilson$_\MakeLowercase{a}^*$}
\address{$^a$Department of Mathematics Univ. of S\~ao Paulo, Caixa Postal 66281, S\~ao Paulo, SP 05315-970 -- Brazil}
\address{$^b$Department of Mathematics, University of California, Riverside, CA 92521}
\address{$^*$Corresponding author. Ph. +001-919-627-5299}
\email{vfutorny@gmail.com,jonas.hartwig@gmail.com,wilsoneaster@gmail.com}
\date{\today} 
\begin{document}				
				\begin{abstract} We give a classification of all irreducible completely pointed $U_q(\mathfrak{sl}_{n+1})$ modules over a characteristic zero field in which $q$ is not a root of unity. 
				This generalizes the classification result of Benkart, Britten and Lemire in the non quantum case. We also show that any infinite-dimensional irreducible completely pointed $U_q(\mathfrak{sl}_{n+1})$ can be obtained from  some irreducible completely pointed module over the quantized Weyl algebra $A_{n+1}^q$.

\end{abstract}
\maketitle
{\bf Keywords:} quantum groups, representation theory, weight modules of bounded multiplicity.
\section*{Introduction}
Let $U_q(\mathfrak{g})$ be the quantum group of finite-dimensional semisimple Lie algebra over a characteristic zero field in which $q$ is not a root of unity. A $U_q(\mathfrak{g})$ weight module is called completely pointed if all of its weight spaces are one dimensional. This paper is a generalization of the classification given by Benkart, Britten and Lemire (\cite{BBL}) of infinite dimensional completely pointed modules of semisimple Lie algebras. In the Lie algebra case such modules can only exist if every ideal of $\mathfrak{g}$ is of type $A$ or $C$. In the current paper we consider the case of $U_q(\mathfrak{sl}_{n+1})$, i.e. the quantum group of type $A$.

Throughout the paper, we will make extensive use of certain generators $E_\alpha$ of $U_q(\mathfrak{g})$ introduced by Lusztig (\cite{L90}) where $\alpha$ ranges over the roots of $\mathfrak{g}$, which are analogues of the root vectors of $\mathfrak{g}$. These generators are not unique, but depend on a choice of reduced decomposition of the longest Weyl group element, $w_0$. For a fixed reduced decomposition of $w_0$, and a irreducible weight module $V$ we see that each $E_\alpha$ acts either locally nilpotently or injectively on $V$. This set is very much like root vectors parabolic subalgebra $\mathfrak{p}$ of $\mathfrak{sl}_{n+1}$ though $\mathfrak{sl}_{n+1}\not \subset U_q(\mathfrak{sl}_{n+1})$ as a Lie algebra so this correspondence is not precise. Nevertheless, we call the $U_q(\mathfrak{sl}_{n+1})$-subalgebra generated by locally nilpotent root vectors $U_q(\mathfrak{p})$ and there exists another $U_q(\mathfrak{sl}_{n+1})$-submodule $U_q(\mathfrak{u})$ where $\mathfrak{u}$ is analogous to the nilradical of $\mathfrak{p}$. The first main theorem of the paper is as follows, where $V^+$ is the $\mathfrak{u}$-invariant subset of $V$:
\begin{mainthmi}
Let $V$ be an irreducible, infinite-dimensional, completely-pointed $U_q(\mathfrak{sl}_{n+1})$-module and let $v^+\in V^+$ be given. Then the action of $U_q(\mathfrak{sl}_{n+1})$  on $V$ can be extended to a $U_q(\mathfrak{gl}_{n+1})$ action such that the following relations hold:
\begin{equation*}
E_{-\varepsilon_i+\varepsilon_j}E_{\varepsilon_i-\varepsilon_j}\cdot v^+=[\bar K_{i};1][\bar K_j;0]\cdot v^+.
\end{equation*}
Also, we have
\begin{equation*}
F_{i}E_{i}\cdot v_\lambda=[\bar K_{i};1][\bar K_{i+1};0]\cdot v_\lambda
\end{equation*}
for any weight vector $v_{\lambda}\in V$.
\end{mainthmi}
Using this theorem one sees, for example, that the action of the cyclic subalgebra $C(U_q(\mathfrak{sl}_{n+1}))$ is completely determined by the action of the $\bar K_i$ (see Lemma \ref{lem:kerpi_AnnV}), hence this gives a classification of irreducible completely pointed $U_q(\mathfrak{sl}_{n+1})$ modules (with all finite dimensional ones given in Proposition \ref{prop:highest}).

In our next two main results, we construct the infinite-dimensional completely pointed $U_q(\mathfrak{sl}_{n+1})$-modules. Let $A^q_{n+1}$ be the rank $n+1$ quantum Weyl algebra and $\pi$ be the homomorphism from $U_q(\mathfrak{gl}_{n+1})$ to $A_{n+1}^q$ (see \cite{Hayashi}) which restricts to $U_q(\mathfrak{sl}_{n+1})$. Then we have the following:
\begin{mainthmiia}
Let $W$ be an irreducible completely pointed $A_{n+1}^q$-module. Let $\pi^\ast W$ be the $U_q(\mathfrak{gl}_{n+1})$-module, given as the $\pi$-pullback of $W$. Then $\pi^\ast W$ is completely reducible, and each irreducible submodule is completely pointed, and occurs with multiplicity one.
\end{mainthmiia}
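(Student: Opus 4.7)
The plan is to transfer the completely pointed weight decomposition of $W$ (as an $A_{n+1}^q$-module) to $\pi^*W$, and then use a natural $\mathbb{Z}$-grading on $A_{n+1}^q$ to split $\pi^*W$ into irreducible $U_q(\mathfrak{gl}_{n+1})$-summands.

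\emph{Step 1 (weight spaces of $\pi^*W$).} Under Hayashi's homomorphism the Cartan generators $\bar K_i$ act on the one-dimensional $A_{n+1}^q$-weight space $W_{\mathbf m}$ by $q^{m_i}$, so the decomposition $W = \bigoplus_{\mathbf m \in S} W_{\mathbf m}$, with $S \subset \mathbb{Z}^{n+1}$, is simultaneously the weight decomposition of $\pi^*W$ under $U_q(\mathfrak{gl}_{n+1})$. Since $q$ is not a root of unity, the map $\mathbf m \mapsto (q^{m_1},\ldots,q^{m_{n+1}})$ is injective, so every weight space of $\pi^*W$ is one-dimensional. This already yields two of the three assertions: any irreducible submodule of $\pi^*W$ is completely pointed, and the multiplicity-one property holds, since two isomorphic irreducible submodules would share weight supports and, being generated by any of their weight vectors, would coincide inside $\pi^*W$.

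\emph{Step 2 (grading and reduction).} Equip $A_{n+1}^q$ with the $\mathbb{Z}$-grading whose degree-$k$ component is spanned by monomials with $k$ more creation than annihilation generators, making $W = \bigoplus_d W^{(d)}$ a graded module where $W^{(d)} = \bigoplus_{\mathbf m \in S,\, |\mathbf m| = d} W_{\mathbf m}$ and $|\mathbf m| = m_1 + \cdots + m_{n+1}$. I would then show that the image $\pi(U_q(\mathfrak{gl}_{n+1}))$ coincides with the degree-zero subalgebra $(A_{n+1}^q)_0$: the Lusztig root vectors $\pi(E_{\varepsilon_i - \varepsilon_j})$ account, up to normalization, for all products $x_i \partial_j$ with $i\neq j$, and together with the Cartan these generate $(A_{n+1}^q)_0$. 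Consequently the $U_q(\mathfrak{gl}_{n+1})$-submodules of $\pi^*W$ coincide with the $(A_{n+1}^q)_0$-submodules of $W$, and $\pi^*W = \bigoplus_d W^{(d)}$ is a direct sum of $U_q(\mathfrak{gl}_{n+1})$-submodules.

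\emph{Step 3 (irreducibility of each $W^{(d)}$).} A standard graded-algebra argument finishes the job: a proper nonzero $(A_{n+1}^q)_0$-submodule $U \subsetneq W^{(d)}$ would give rise to a proper $A_{n+1}^q$-submodule $A_{n+1}^q \cdot U$ of $W$, since by degree considerations $A_{n+1}^q \cdot U \cap W^{(d)} = U$, contradicting irreducibility of $W$. Hence each $W^{(d)}$ is $(A_{n+1}^q)_0$-irreducible and thus $U_q(\mathfrak{gl}_{n+1})$-irreducible, establishing complete reducibility.

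The main obstacle is verifying the identity $\pi(U_q(\mathfrak{gl}_{n+1})) = (A_{n+1}^q)_0$ asserted in Step 2. This requires showing that Lusztig's non-simple root vectors $E_{\varepsilon_i - \varepsilon_j}$ map under $\pi$ to all products $x_i \partial_j$ and not merely those with $|i - j| = 1$; I expect this to reduce to an inductive computation using the commutation relations between Lusztig generators together with the explicit formulas defining Hayashi's homomorphism.
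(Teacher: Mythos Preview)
Your approach is essentially the paper's: both decompose $W$ by the total $\mathbb{Z}$-grading (the paper phrases this as the $\mathbb{E}_q=\omega_1\cdots\omega_{n+1}$-eigenspace decomposition $W=\bigoplus_m W[m]$, which is the same thing), and both rest on the identity $\pi\big(U_q(\mathfrak{gl}_{n+1})\big)=A_{n+1}^q[0]$, which the paper proves as a separate lemma by exactly the inductive computation you anticipate at the end.

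One genuine correction: in Step~1 you parametrize the weights of $W$ by $\mathbf m\in\mathbb{Z}^{n+1}$ with $\bar K_i$ acting as $q^{m_i}$. A general irreducible completely pointed $A_{n+1}^q$-module has weights $(\mu_1,\ldots,\mu_{n+1})\in(\mathbb{F}^\times)^{n+1}$, not necessarily integral powers of $q$; what is true is that all weights lie in a single $\mathbb{Z}^{n+1}$-orbit $(q^{m_1}\mu_1,\ldots,q^{m_{n+1}}\mu_{n+1})$ for some fixed base point. Your grading $|\mathbf m|$ and the injectivity argument should therefore be read relative to such a base, or equivalently via the $\mathbb{E}_q$-eigenvalue $\xi q^m$ as the paper does. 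With this fix everything in Steps~1 and~2 goes through.

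Your Step~3 argument (if $U\subsetneq W^{(d)}$ is a proper $(A_{n+1}^q)_0$-submodule then $A_{n+1}^q\cdot U$ is a proper graded $A_{n+1}^q$-submodule, since $A_{n+1}^q\cdot U\cap W^{(d)}=(A_{n+1}^q)_0\cdot U=U$) is a clean abstract alternative to the paper's more concrete proof, which picks two weight vectors $w_0,w_1\in W[m]$, uses irreducibility of $W$ to find a single monomial $a$ and $r\in R$ with $raw_0=w_1$, and then observes that degree considerations force $a\in A_{n+1}^q[0]$.
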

This gives a construction of irreducible infinite dimensional, completely pointed $U_q(\mathfrak{sl}_{n+1})$-modules. An application of Theorem I then gives the following, which completes our classification:
\begin{mainthmiib}
Any infinite-dimensional irreducible completely pointed $U_q(\mathfrak{sl}_{n+1})$ is isomorphic to a direct summand of $\pi^\ast W$ for some irreducible completely pointed $A_{n+1}^q$-module $W$.
\end{mainthmiib}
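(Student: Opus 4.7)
The plan is to build the $A_{n+1}^q$-module $W$ from the data of $V$ by first extending the $U_q(\mathfrak{sl}_{n+1})$-action to $U_q(\mathfrak{gl}_{n+1})$, then showing that this action factors through $\pi$, and finally identifying $V$ with a summand of $\pi^\ast W$ via Theorem II A.

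First, I would apply Theorem I to extend the $U_q(\mathfrak{sl}_{n+1})$-action on $V$ to a $U_q(\mathfrak{gl}_{n+1})$-action satisfying the stated quadratic relations at some vector $v^+ \in V^+$. These relations match precisely the images under $\pi$ of the defining relations of the quantum Weyl algebra. By Lemma \ref{lem:kerpi_AnnV}, together with the fact that $V$ is generated by $v^+$ and that the extended action on $v^+$ is determined by the $\bar K_i$-eigenvalues via the relations of Theorem I, one concludes that $\ker \pi \subseteq \mathrm{Ann}(V)$. Hence the $U_q(\mathfrak{gl}_{n+1})$-action on $V$ descends to an action of $\pi(U_q(\mathfrak{gl}_{n+1})) \subseteq A_{n+1}^q$.

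Next, I would construct $W$ by invoking the classification of irreducible completely pointed $A_{n+1}^q$-modules developed earlier in the paper to produce an irreducible completely pointed $W$ whose $\bar K_i$-spectrum (pulled back via $\pi$) agrees with the weight support of $V$. Theorem II A then gives $\pi^\ast W$ as a multiplicity-free direct sum of irreducible completely pointed $U_q(\mathfrak{gl}_{n+1})$-submodules. Since any irreducible completely pointed $U_q(\mathfrak{gl}_{n+1})$-module is determined up to isomorphism by its weight support (the matrix coefficients being prescribed by the explicit formulas of Theorem I), $V$ must be isomorphic to one of the summands, and restricting to $U_q(\mathfrak{sl}_{n+1})$ completes the argument.

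The main obstacle will be verifying that Theorem I's quadratic relations genuinely coincide with generators of $\ker \pi$, and then exhibiting an irreducible completely pointed $A_{n+1}^q$-module $W$ whose weight support realizes that of $V$. The first amounts to a direct computation against Hayashi's explicit formulas for $\pi$; the second requires a sufficiently rich family of completely pointed $A_{n+1}^q$-modules (analogous to twisted Fock modules) to cover every possible weight structure that can arise from $V$.
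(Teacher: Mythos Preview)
Your first step is fine and matches the paper: extend to $U_q(\mathfrak{gl}_{n+1})$ via Theorem I and invoke Lemma \ref{lem:kerpi_AnnV} so that the action factors through $\pi(U_q(\mathfrak{gl}_{n+1}))=A_{n+1}^q[0]$. The gap is in your second step. You write that you would ``invoke the classification of irreducible completely pointed $A_{n+1}^q$-modules developed earlier in the paper'' and then match weight supports. No such classification is developed in the paper; Lemma \ref{lem:Anq_cptd} only says that irreducible weight modules of $A_{n+1}^q$ are automatically completely pointed, it does not enumerate them. Your own closing paragraph concedes this (``requires a sufficiently rich family of completely pointed $A_{n+1}^q$-modules\ldots''), so as written the argument is incomplete precisely at the point where $W$ has to be produced.

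The paper avoids this problem by \emph{constructing} $W$ directly from $V$ rather than selecting it from a list. One forms the induced module $\widetilde{W}=A_{n+1}^q\otimes_{U_q(\mathfrak{gl}_{n+1})}V$, which is $\mathbb{Z}$-graded by the eigenvalues of $\mathbb{E}_q$, and observes that $\widetilde{W}[0]\cong V$ as $A_{n+1}^q[0]$-modules (this is exactly where Lemma \ref{lem:kerpi_AnnV} is used, to make the map $\pi(a)\otimes v\mapsto av$ well defined). One then quotients by the unique maximal graded submodule $N$ satisfying $N\cap\widetilde{W}[0]=\{0\}$ and sets $W=\widetilde{W}/N$. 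Irreducibility of $W$ follows from maximality of $N$ together with irreducibility of $V\cong\widetilde{W}[0]$, and complete pointedness from Lemma \ref{lem:Anq_cptd}. Since $V\cong W[0]$ sits as a graded piece of $\pi^\ast W$, it is a direct summand. This induction-and-quotient construction is the missing idea in your proposal; once you have it, no external classification and no appeal to ``determined by weight support'' is needed.
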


\section{Preliminaries}
Let $\mathbb{F}$ be a field of characteristic 0 closed under quadratic extensions and suppose $q\in\mathbb{F}$ is nonzero and not a root of unity. For us, $U_q(\mathfrak{gl}_{n+1})$ is the associative unital $\mathbb{F}$-algebra with generators $E_i,F_i,\bar K_j^{\pm 1}$, $i\in\{1,\ldots,n\}$, $j\in\{1,\ldots,n+1\}$ and defining relations
\begin{enumerate}
\item $\bar K_j E_i \bar K_j^{-1} = q^{\delta_{ij}-\delta_{j,i+1}}E_i,\bar K_j F_i \bar K_j^{-1} = q^{-(\delta_{ij}-\delta_{j,i+1})}F_i,i\in\{1,\ldots,n\},j\in\{1,\ldots,n+1\}$\\
\item $[E_i,F_j]=\delta_{ij}\frac{\bar K_i\bar K_{i+1}^{-1}-\bar K_i^{-1}\bar K_{i+1}}{q-q^{-1}},i,j\in\{1,\ldots,n+1\}$\\
\item $[E_i^\pm,E_j^\pm]=0 \quad \text{for $|i-j|>1$},$\\
\item $(E_i^\pm)^2E_j^\pm-[2]_qE_j^\pm E_i^\pm E_j^\pm + E_j^\pm (E_i^\pm)^2=0,\quad \text{for $|i-j|=1$,}$
\end{enumerate}
where $E_i^+:=E_i$, $E_i^-:=F_i$, and $[k]_q=\frac{q^k-q^{-k}}{q-q^{-1}},k\in \mathbb{Z}_{\geq 0}$. Then, $U_q(\mathfrak{sl}_{n+1})$ is the subalgebra of $U_q(\mathfrak{gl}_{n+1})$ generated by $E_i,F_i$, and $K_i:=\bar{K}_i\bar{K}_{i+1}^{-1}$.

For $\mathfrak{g}=\mathfrak{sl}_{n+1}$ recall the following automorphisms $T_i:U_q(\mathfrak{g})\to U_q(\mathfrak{g}), 1\leq i \leq n$ as given by Lusztig \cite{L90}:
\begin{equation*}
T_i(E_j)=\begin{cases}
		-F_iK_i, &\text{if }i=j\\
		q^{-1}E_jE_i-E_iE_j, &\text{if }|i-j|=1\\
		E_j, &\text{otherwise},
\end{cases}
\end{equation*}
\begin{equation*}
T_i(F_j)=\begin{cases}
		-K_i^{-1}E_i, &\text{if }i=j\\
		-F_jF_i+qF_iF_j, &\text{if }|i-j|=1\\
		F_j, &\text{otherwise},
\end{cases}
\end{equation*}

\begin{equation*}
T_i(K_j)=\begin{cases}
K_j^{-1}, \text{ if $i=j$,}\\
K_iK_j, \text{ if $|i-j|=1$,}\\
K_j, \text{ otherwise}.
\end{cases}
\end{equation*}
We also recall the braid relations satisfied by the $T_i$:
\begin{equation*}
T_iT_{i+1}T_i=T_{i+1}T_iT_{i+1},
\end{equation*}
\begin{equation*}
T_iT_j=T_jT_i, \text{if }|i-j|>1.
\end{equation*}

To each root $\alpha$ we assign a corresponding root vector $E_\alpha$ in using following method.  Let $w_0=s_{i_1}s_{i_2}\cdots s_{i_r}$ be a reduced decomposition of the longest Weyl group element.  Then every positive root occurs exactly once in the following sequence:
$$
\beta_1=\alpha_{i_1},\beta_2=s_{i_1}(\alpha_{i_2}),\dots, \beta_r=s_{i_1}s_{i_2}\cdots s_{i_{r-1}}(\alpha_{i_r}).
$$
The positive root vector $E_{\beta_k}$ is defined to be $T_{i_1}T_{i_2}\cdots T_{i_{k-1}}(E_{i_k})$, and the negative root vector $E_{-\beta_k}$ is defined by the same sequence of $T_i$'s acting on $F_{i_k}$. We choose $w_0=s_1s_2\cdots s_{n} s_1s_2\cdots s_{n-1}\cdots s_1s_2s_1$ as our reduced expression of the longest Weyl group element of $U_q(\mathfrak{sl}_{n+1})$, which gives the following sequence of positive roots:
\begin{gather*}
\varepsilon_1-\varepsilon_2, \varepsilon_1-\varepsilon_3, \varepsilon_1-\varepsilon_4,\dots,\varepsilon_1-\varepsilon_{n+1},\\
\varepsilon_2-\varepsilon_3, \varepsilon_2-\varepsilon_4,\dots, \varepsilon_2-\varepsilon_{n+1},\\
\cdots\\
\varepsilon_{n-1}-\varepsilon_{n}, \varepsilon_{n}-\varepsilon_{n+1},\\
\varepsilon_{n}-\varepsilon_{n+1}.
\end{gather*}

We recall also from \cite[Example 8.1.5]{CP}  the following identity:
$$
T_iT_{i+1}(E_i)=E_{i+1}.
$$
Using this fact, and the braid relations, one obtains the following simplified form for the root vectors:
\begin{gather*}
E_{\varepsilon_1-\varepsilon_2}=E_1,E_{\varepsilon_1-\varepsilon_3}=T_1(E_2),E_{\varepsilon_1-\varepsilon_4}=T_1T_2(E_3),\dots, E_{\varepsilon_1-\varepsilon_{n-1}}=T_1T_2\cdots T_{n-1}(E_{n}),\\
E_{\varepsilon_2-\varepsilon_3}=E_2,E_{\varepsilon_2-\varepsilon_4}=T_2(E_3),\dots, E_{\varepsilon_2-\varepsilon_{n+1}}=T_2T_3\cdots T_{n-1}(E_{n}),\\
\dots\\
E_{\varepsilon_{n-1}-\varepsilon_{n}}=E_{n-1},E_{\varepsilon_{n-1}-\varepsilon_{n+1}}=T_{n-1}(E_{n}),\\
E_{\varepsilon_{n}-\varepsilon_{n+1}}=E_{n}
\end{gather*}
and similarly for the negative root vectors. Let $1\leq i<j<k \leq n$. Double induction on $i$ and $j$ gives the following, where $[x,y]_v=xy-vyx$:
\begin{align}
E_{\varepsilon_i-\varepsilon_k}&=-[E_{\varepsilon_i-\varepsilon_j},E_{\varepsilon_j-\varepsilon_k}]_{q^{-1}},\label{eq:qg1}\\
E_{-\varepsilon_i+\varepsilon_k}&=-[E_{-\varepsilon_j+\varepsilon_k},E_{-\varepsilon_i+\varepsilon_{j}}]_{q}\label{eq:qg2}.
\end{align}
Similarly we have:
\begin{gather}
 [E_{\varepsilon_j-\varepsilon_k},E_{-\varepsilon_{i}+\varepsilon_{k}} ]=-qK_{jk}^{-1}E_{-\varepsilon_i+\varepsilon_j},\qquad [E_{\varepsilon_{i}-\varepsilon_{k}},E_{-\varepsilon_{j}+\varepsilon_{k}}]=-K_{jk}E_{\varepsilon_{i}-\varepsilon_{j}},\label{eq:qg3}\\
 [E_{\varepsilon_{i}-\varepsilon_{j}},E_{-\varepsilon_{i}+\varepsilon_{k}}]=K_{ij}E_{-\varepsilon_{j}+\varepsilon_{k}},\qquad
 [E_{\varepsilon_{i}-\varepsilon_{k}},E_{-\varepsilon_{i}+\varepsilon_{j}}]=q^{-1}K_{ij}^{-1}E_{\varepsilon_{j}-\varepsilon_{k}}\label{eq:qg4},
\end{gather}
where $K_{ij}=\prod_{k=i}^{j-1}K_k$. Also, relations (3) and (4) of the definition of $U_q(\mathfrak{sl}_{n+1})$ lead to the following:
\begin{gather}
E_{\varepsilon_{j}-\varepsilon_{k}}E_{\varepsilon_i-\varepsilon_k}=q^{-1}E_{\varepsilon_{i}-\varepsilon_{k}}E_{\varepsilon_j-\varepsilon_k},
E_{\varepsilon_{i}-\varepsilon_{k}}E_{\varepsilon_{i}-\varepsilon_{j}}=q^{-1}E_{\varepsilon_{i}-\varepsilon_{j}}E_{\varepsilon_{i}-\varepsilon_{k}},\label{eq:qg6}\\
E_{-\varepsilon_{j}+\varepsilon_{k}}E_{-\varepsilon_i+\varepsilon_k}=q^{-1}E_{-\varepsilon_{i}+\varepsilon_{k}}E_{-\varepsilon_j+\varepsilon_k},
E_{-\varepsilon_{i}+\varepsilon_{k}}E_{-\varepsilon_{i}+\varepsilon_{j}}=q^{-1}E_{-\varepsilon_{i}+\varepsilon_{j}}E_{-\varepsilon_{i}+\varepsilon_{k}}.\label{eq:qg7}
\end{gather}
Finally, if $1\leq i <j <k <l \leq n$ then:
\begin{align}
 [E_{\varepsilon_{i}-\varepsilon_{j}},E_{-\varepsilon_{k}+\varepsilon_{l}}]&=[E_{\varepsilon_{i}-\varepsilon_{j}},E_{\varepsilon_{k}-\varepsilon_{l}}]=[E_{\varepsilon_{i}-\varepsilon_{l}},E_{\varepsilon_{j}-\varepsilon_{k}}]=0\\
 [E_{\varepsilon_{i}-\varepsilon_{k}},E_{\varepsilon_{j}-\varepsilon_{l}}]&=(q-q^{-1})E_{\varepsilon_{i}-\varepsilon_{l}}E_{\varepsilon_{j}-\varepsilon_{k}}\\
 [E_{-\varepsilon_{i}+\varepsilon_{k}},E_{-\varepsilon_{j}+\varepsilon_{l}}]&=(q-q^{-1})E_{-\varepsilon_{i}+\varepsilon_{l}}E_{-\varepsilon_{j}+\varepsilon_{k}}\\
 [E_{\varepsilon_{i}-\varepsilon_{k}},E_{-\varepsilon_{j}+\varepsilon_{l}}]&=-(q-q^{-1})K_{jk}E_{-\varepsilon_{i}+\varepsilon_{j}}E_{\varepsilon_{k}-\varepsilon_{l}}\\
 [E_{\varepsilon_{j}-\varepsilon_{l}},E_{-\varepsilon_{i}+\varepsilon_{k}}]&=(q-q^{-1})K_{jk}^{-1}E_{\varepsilon_{k}-\varepsilon_{l}}E_{-\varepsilon_{i}+\varepsilon_{j}}.
\end{align}
Finally, since the $T_i$ are $U_q(\mathfrak{g})$ automorphisms, we have:
\begin{equation}
[E_{\varepsilon_i-\varepsilon_j},E_{-\varepsilon_i+\varepsilon_j}]=[K_{ij};0]
\end{equation}
where $[K;j]=\frac{q^jK-q^{-j}K^{-1}}{q-q^{-1}}$ for invertible $K\in \mathbb{F}[K_1^{\pm 1},K_2^{\pm 1},\dots, K_n^{\pm 1}]$ and $j\in \mathbb{Z}$.

Let $V$ be a $U_q(\mathfrak{sl}_{n+1})$-module. For $\lambda \in (\mathbb{F}^\times)^n$, the weight space $V_\lambda$ is defined to be the subspace $\{v\in V|K_i\cdot v=\lambda_iv\}$. It is easy to show that the sum of weight spaces in $V$ over all $\lambda\in (\mathbb{F}^\times)^n$ is direct. Moreover, if $V$ is finite-dimensional then it is the sum of its weight spaces (see \cite{CP}) though the same is not necessarily true if $V$ is infinite-dimensional. A $U_q(\mathfrak{sl}_{n+1})$-module that is the direct sum of its weight spaces is called a $U_q(\mathfrak{sl}_{n+1})$ weight module. Throughout this paper, we will consider only irreducible modules in the category of $U_q(\mathfrak{sl}_{n+1})$ weight modules.

\section{Classification of irreducible completely pointed modules}
Let $\mathfrak{g}=\mathfrak{sl}_{n+1}$ and $\Phi=\Phi(\mathfrak{g})$ be the root system of $\mathfrak{g}$. Let $V$ be an irreducible $U_q(\mathfrak{g})$ weight module and $\alpha \in \Phi$. On a weight module, the only possible eigenvalue of $E_\alpha$ is 0, hence $E_\alpha$ either acts nilpotently or injectively on a given weight vector. The subset of vectors on which $E_\alpha$ acts nilpotently (resp. injectively) is a submodule of $V$. Since $V$ is irreducible, we see that $E_\alpha$ acts nilpotently on all of $V$ or else it acts injectively. In the first case, $E_\alpha$ is called \emph{locally nilpotent} and in the second it is called \emph{torsion free}. Highest weight modules are a special case where every positive root vector $E_\alpha$ is locally nilpotent. The other extreme is where every root vector is torsion free. Finally, there are cases where a certain subset of positive root vectors are locally nilpotent but not necessarily all of them. We discuss each case below.

\subsection{Highest weight modules} The irreducible highest weight $U_q(\mathfrak{g})$-module with highest weight $\lambda\in (\mathbb{F}^\times)^n$ is denoted $L(\lambda)$.  Note that for us, unlike the finite-dimensional case for example, $K_i$ can have arbitrary eigenvalues in $\mathbb{F}\backslash\{0\}$, not just powers of $q$. Also this is done so that we can have examples of torsion-free modules (see below).

\begin{lemma} \label{Lem:det}  Assume $V$ is a completely pointed $U_q(\mathfrak{g})$-module and $v\in V$ is a weight vector. For $\theta$ in the positive root lattice, suppose $x_1,x_2\in U_q(\mathfrak{g})_\theta$ and $y_1,y_2\in U_q(\mathfrak{g})_{-\theta}.$  Then $y_ix_j\cdot v=\gamma_{i,j}v$ for some $\gamma_{i,j}\in \mathbb{F}$ and $i,j\in \{1,2\}$, and the $2\times 2$ matrix $(\gamma_{ij})$ is singular.
\end{lemma}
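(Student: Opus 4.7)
The plan is to exploit the one-dimensionality of weight spaces twice: once to define the scalars $\gamma_{ij}$, and once to detect a linear dependence among the columns of the matrix.

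First, I observe that $x_j$ has weight $\theta$ so $x_j\cdot v$ lies in the weight space $V_{\mathrm{wt}(v)+\theta}$; then $y_i$ has weight $-\theta$, so $y_ix_j\cdot v$ lies in $V_{\mathrm{wt}(v)}$. Since $V$ is completely pointed, $V_{\mathrm{wt}(v)}$ is spanned by $v$, which produces the scalars $\gamma_{ij}\in\mathbb{F}$ with $y_ix_j\cdot v=\gamma_{ij}v$.

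For singularity, the key observation is that $x_1\cdot v$ and $x_2\cdot v$ both lie in $V_{\mathrm{wt}(v)+\theta}$, which is at most one-dimensional by the completely pointed hypothesis. Hence there exist $a_1,a_2\in\mathbb{F}$, not both zero, with $a_1x_1\cdot v+a_2x_2\cdot v=0$. Applying $y_i$ to this identity for $i=1,2$ yields
\begin{equation*}
a_1\gamma_{i,1}+a_2\gamma_{i,2}=0,\qquad i=1,2,
\end{equation*}
which exhibits a nontrivial linear relation among the columns of $(\gamma_{ij})$; therefore $\det(\gamma_{ij})=0$.

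There is essentially no obstacle here: the argument is a two-line consequence of the fact that both the weight space of $v$ and the weight space shifted by $\theta$ are one-dimensional. The only thing worth checking is that the weight space $V_{\mathrm{wt}(v)+\theta}$ is indeed at most one-dimensional even when $\mathrm{wt}(v)+\theta$ is not itself a weight (in which case $V_{\mathrm{wt}(v)+\theta}=0$), but in that degenerate case both $x_1\cdot v$ and $x_2\cdot v$ vanish, so all $\gamma_{ij}=0$ and the matrix is trivially singular.
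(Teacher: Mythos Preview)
Your argument is correct and is exactly the standard proof (the paper simply cites \cite[Lemma~3.2]{BBL}): the one-dimensionality of the weight space $V_{\mathrm{wt}(v)}$ yields the scalars $\gamma_{ij}$, and the one-dimensionality of the shifted weight space forces a linear dependence between $x_1\cdot v$ and $x_2\cdot v$, hence between the columns of $(\gamma_{ij})$. The only cosmetic point is that in the quantum setting weights are recorded multiplicatively, so ``$\mathrm{wt}(v)+\theta$'' should be read as the appropriate $q^\theta$-shift of $\mathrm{wt}(v)$, but this does not affect the argument.
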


\begin{proof}
Same as the proof in \cite[Lemma 3.2]{BBL}, with $\mathbb{F}$ in place of $\mathbb{C}$.
\end{proof}

\begin{proposition}[Analogous to {\cite[Proposition 3.2]{BBL}}]\label{prop:highest}
The irreducible highest weight $U_q(\mathfrak{sl}_{n+1})$-module $L(\lambda)$ is completely pointed only if $\lambda=\pm 1,\lambda_i=\pm q,\lambda_1=c,\lambda_n=c$, $\lambda_i\lambda_{i+1}=\pm q^{-1}$ for some $i=1,2,\dots, n-1$, where $c\in \mathbb{F^\times}$ is arbitrary and all unspecified entries are $\pm 1$.
\end{proposition}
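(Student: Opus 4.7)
The plan is to mimic the BBL argument by exploiting the fact that every weight space of $L(\lambda)$ is at most one-dimensional. Fixing a highest weight vector $v^+$, I would apply Lemma~\ref{Lem:det} with $v = v^+$, taking $\theta$ to range over positive roots and choosing pairs $(x_1, x_2)$ of monomials of weight $-\theta$ in the negative root vectors together with pairs $(y_1, y_2)$ of monomials of weight $\theta$ in the positive root vectors. Since each $x_j \cdot v^+$ lies in the at most one-dimensional space $L(\lambda)_{\lambda - \theta}$, the compositions $y_i x_j \cdot v^+ = \gamma_{ij} v^+$ lie in $\mathbb{F} v^+$, and the $2 \times 2$ scalar matrix $(\gamma_{ij})$ must be singular.

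For the first family of constraints I would take the ``triangle'' configurations associated to each triple $1 \le i < j < k \le n+1$, namely
\begin{align*}
x_1 &= E_{-\varepsilon_i + \varepsilon_k}, & x_2 &= E_{-\varepsilon_j + \varepsilon_k} E_{-\varepsilon_i + \varepsilon_j}, \\
y_1 &= E_{\varepsilon_i - \varepsilon_k}, & y_2 &= E_{\varepsilon_i - \varepsilon_j} E_{\varepsilon_j - \varepsilon_k}.
\end{align*}
Using the commutation identities from the preliminaries, I would push every raising operator past the lowering ones; combined with the highest-weight condition $E_\alpha \cdot v^+ = 0$ and the fact that each $\bar K_r$ acts on $v^+$ by a known scalar, this reduces each $\gamma_{ij}$ to an explicit expression in the $\lambda_r$ and in $q$. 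The vanishing of the determinant then yields a polynomial constraint indexed by the triple $(i,j,k)$.

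After cancellation these constraints should factor into products of simple pieces of the form $\lambda_r^2 - 1$, $\lambda_r^2 - q^{\pm 2}$, and $\lambda_r \lambda_{r+1} \mp q^{-1}$. A careful case analysis over which factor is forced to vanish in each triple narrows the possibilities to those listed in the statement: the fact that only triples of the form $(1, j, k)$ touch the index $1$ (and similarly for $n+1$) explains why $\lambda_1$ or $\lambda_n$ is allowed to remain arbitrary in $\mathbb{F}^\times$, while every interior index is forced either to $\pm 1$, or to one of the two ``defect'' values $\lambda_i = \pm q$ or $\lambda_i \lambda_{i+1} = \pm q^{-1}$ (and at most one such defect may appear).

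The main technical obstacle will be the bookkeeping for the scalars $\gamma_{ij}$. The quantum commutation relations for swapping positive through negative root vectors carry extra factors of $q$, $(q - q^{-1})$, and rank-one remainders, so expanding each composition produces many terms. Judicious use of the $[K;j]$ bracket and the identity $[E_{\varepsilon_i - \varepsilon_j}, E_{-\varepsilon_i + \varepsilon_j}] = [K_{ij}; 0]$ will be essential for collapsing each determinant into recognizable factored form. Once this reduction has been made, the case analysis parallels \cite[Proposition 3.2]{BBL} exactly.
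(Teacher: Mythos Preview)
Your overall strategy matches the paper's: apply Lemma~\ref{Lem:det} at the highest weight vector with well-chosen pairs $(x_1,x_2,y_1,y_2)$ and factor the resulting determinants. However, the single ``triangle'' family you write down is not enough to reach the stated conclusion. For a triple $i<j<k$ that family yields, after the reduction you describe, the constraint
\[
[\lambda_{ij};0]\,[\lambda_{jk};0]\,\big([\lambda_{ik};0]+q^{-1}\lambda_{ik}^{-1}\big)=0,
\]
i.e.\ $\lambda_{ij}=\pm 1$, $\lambda_{jk}=\pm 1$, or $\lambda_{ik}=\pm q^{-1}$. These conditions alone do not force the list in the statement: already for $n=3$ the weight $\lambda=(c,\,1,\,q^{-1}c^{-1})$ with generic $c$ satisfies every such triangle constraint (take the $\lambda_2=\pm 1$ branch for the triples $(1,2,3)$ and $(2,3,4)$, and the $\lambda_{ik}=\pm q^{-1}$ branch for $(1,2,4)$ and $(1,3,4)$), yet it is excluded by the proposition. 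In particular the factor $\lambda_r^2-q^{\pm 2}$ that you anticipate never appears in this family.

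The paper closes this gap with two further families you have not identified. First, for four indices $i<j<k<l$ it takes $x_2$ split through $k$ but $y_2$ split through $j$ (\emph{different} intermediate points), which produces the independent relation $[\lambda_{ij};0]\,[\lambda_{kl};0]=0$; this is what forces at most one ``defect'' location and kills the counterexample above. Second, once a single interior defect $\lambda_i=c$ with $1<i<n$ survives, a separate family is needed in which $x_1,x_2,y_1,y_2$ are each products of \emph{two} root vectors of total weight $\pm\big((\varepsilon_{i-1}-\varepsilon_{i+1})+(\varepsilon_i-\varepsilon_{i+2})\big)$; the resulting determinant collapses to $[c;0]^2\big([c;0]^2-1\big)=0$, giving $c\in\{\pm 1,\pm q^{\pm 1}\}$. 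Neither of these constraints is a consequence of your triangle family, so the ``careful case analysis'' you invoke cannot be carried out from that family alone. With these two additional configurations in hand, the rest of your outline (pushing positives past negatives via \eqref{eq:qg3}--\eqref{eq:qg4} and the bracket identity $[E_{\varepsilon_i-\varepsilon_j},E_{-\varepsilon_i+\varepsilon_j}]=[K_{ij};0]$) is exactly what the paper does.
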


\begin{proof}
Let $v^+$ be a highest weight vector of $L(\lambda)$, where $\lambda=(\lambda_1,\lambda_2,\dots, \lambda_n)\in (\mathbb{F}^\times)^n$. All irreducible highest weight $U_q(\mathfrak{sl}_2)$-modules are completely pointed, and equal to $L(c)$ for some $c$ which proves the $n=1$ case.

Suppose $n>1$.  Let $x_1=E_{-\varepsilon_{i}+\varepsilon_{i+2}}, x_2=E_{-\varepsilon_i+\varepsilon_{i+1}}E_{-\varepsilon_{i+1}+\varepsilon_{i+2}}\in U_q(\mathfrak{g})_{-\varepsilon_i+\varepsilon_{i+2}}$, and $y_1=E_{\varepsilon_i-\varepsilon_{i+2}},y_2=E_{\varepsilon_i-\varepsilon_{i+1}}E_{\varepsilon_{i+1}-\varepsilon_{i+2}}\in U_q(\mathfrak{g})_{\varepsilon_i-\varepsilon_{i+2}}$ for $i\in \{1,2,\dots, n-1\}$. We denote $\lambda_{ij}=\prod_{k=i}^{j-1}\lambda_k$ and compute:
\begin{align*}
y_1x_1\cdot v_\lambda&=E_{\varepsilon_i-\varepsilon_{i+2}}E_{-\varepsilon_{i}+\varepsilon_{i+2}}\cdot v_\lambda\\
		&=E_{-\varepsilon_{i}+\varepsilon_{i+2}}E_{\varepsilon_i-\varepsilon_{i+2}}\cdot v_\lambda+[K_{i,i+2};0]\cdot v_\lambda\\
		&=[\lambda_{i,i+2};0]v_\lambda,\\
y_1x_2\cdot v_\lambda&=E_{\varepsilon_i-\varepsilon_{i+2}}E_{-\varepsilon_i+\varepsilon_{i+1}}E_{-\varepsilon_{i+1}+\varepsilon_{i+2}}\cdot v_\lambda\\
		&=q^{-1}K_i^{-1}E_{\varepsilon_{i+1}-\varepsilon_{i+2}}E_{-\varepsilon_{i+1}+\varepsilon_{i+2}}\cdot v_\lambda\\
		&=q^{-1}K_i^{-1}[K_{i+1};0]\cdot v_\lambda\\
		&=q^{-1}\lambda_i^{-1}[\lambda_{i+1};0]v_\lambda,\\
y_2x_1\cdot v_\lambda&=E_{\varepsilon_i-\varepsilon_{i+1}}E_{\varepsilon_{i+1}-\varepsilon_{i+2}}E_{-\varepsilon_{i}+\varepsilon_{i+2}}\cdot v_\lambda\\
		&=-E_{\varepsilon_i-\varepsilon_{i+1}}qK_{i+1}^{-1}E_{-\varepsilon_i+\varepsilon_{i+1}}\cdot v_\lambda\\
		&=-K_{i+1}^{-1}[K_{i};0]\cdot v_\lambda\\
		&=-\lambda_{i+1}^{-1}[\lambda_i;0] v_\lambda,\\
y_2x_2\cdot v_\lambda&=E_{\varepsilon_i-\varepsilon_{i+1}}E_{\varepsilon_{i+1}-\varepsilon_{i+2}}E_{-\varepsilon_i+\varepsilon_{i+1}}E_{-\varepsilon_{i+1}+\varepsilon_{i+2}}\cdot v_\lambda\\
		&=[\lambda_{i};0][\lambda_{i+1};0]v_\lambda.
\end{align*}
Therefore, Lemma \ref{Lem:det} gives:
\begin{equation}
[\lambda_{i};0][\lambda_{i+1};0]([\lambda_{i,i+2};0]+q^{-1}\lambda_{i,i+2}^{-1})=0\label{eq:lambda1},
\end{equation}
from which we see $\lambda_i=\pm 1, \lambda_{i+1}=\pm1,$ or $\lambda_i\lambda_{i+1}=\pm q^{-1}$. This finishes the proof for $n<3$, so assume $n\geq 3$.

For $1\leq i < j < k < l \leq n+1$, let $x_1=E_{-\varepsilon_i+\varepsilon_{l}}, x_2=E_{-\varepsilon_i+\varepsilon_{k}}E_{-\varepsilon_{k}+\varepsilon_{l}}\in U_q(\mathfrak{g})_{-\varepsilon_{i}+\varepsilon_{l}}$ and $y_1=E_{\varepsilon_i-\varepsilon_{l}}, y_2=E_{\varepsilon_i-\varepsilon_{j}}E_{\varepsilon_{j}+\varepsilon_{l}}\in U_q(\mathfrak{g})_{\varepsilon_i-\varepsilon_{l}}$.  We compute:
\begin{align*}
y_1x_1\cdot v_\lambda&=E_{\varepsilon_i-\varepsilon_{l}}E_{-\varepsilon_i+\varepsilon_{l}}\cdot v_\lambda\\
		&=[\lambda_{il};0]v_\lambda,\\
y_1x_2\cdot v_\lambda&=E_{\varepsilon_i-\varepsilon_{l}}E_{-\varepsilon_i+\varepsilon_{k}}E_{-\varepsilon_{k}+\varepsilon_{l}}\cdot v_\lambda\\
		&=q^{-1}K_{ik}^{-1}E_{\varepsilon_{k}-\varepsilon_{l}}E_{-\varepsilon_{k}+\varepsilon_{l}}\cdot v_\lambda\\
		&=q^{-1}\lambda_{ik}^{-1}[\lambda_{kl};0]v_\lambda,\\
y_2x_1\cdot v_\lambda&=E_{\varepsilon_i-\varepsilon_{j}}E_{\varepsilon_{j}-\varepsilon_{l}}E_{-\varepsilon_i+\varepsilon_{l}}\cdot v_\lambda\\
		&=-E_{\varepsilon_i-\varepsilon_{j}}qK_{jl}^{-1}E_{-\varepsilon_i+\varepsilon_{j}}\cdot v_\lambda\\
		&=-q^2\lambda_{jl}^{-1}[\lambda_{ij};0] v_\lambda,\\
y_2x_2\cdot v_\lambda &=E_{\varepsilon_i-\varepsilon_{j}}E_{\varepsilon_{j}-\varepsilon_{l}}E_{-\varepsilon_i+\varepsilon_{k}}E_{-\varepsilon_{k}+\varepsilon_{l}}\cdot v_\lambda\\
		&=E_{\varepsilon_i-\varepsilon_{j}}E_{-\varepsilon_i+\varepsilon_{k}}E_{\varepsilon_{j}-\varepsilon_{l}}E_{-\varepsilon_{k}+\varepsilon_{l}}\cdot v_\lambda\\
		&\qquad+E_{\varepsilon_i-\varepsilon_{j}}(q-q^{-1})K_{jk}^{-1}E_{-\varepsilon_i+\varepsilon_{j}}E_{\varepsilon_{k}-\varepsilon_{l}}E_{-\varepsilon_{k}+\varepsilon_{l}}\cdot v_\lambda\\
		&=(q^{2}-1)K_{jk}^{-1}E_{\varepsilon_i-\varepsilon_{j}}E_{-\varepsilon_i+\varepsilon_{j}}E_{\varepsilon_{k}-\varepsilon_{l}}E_{-\varepsilon_{k}+\varepsilon_{l}}\cdot v_\lambda\\
		&=(q^{2}-1)K_{jk}^{-1}[K_{ij};0][K_{kl};0]\cdot v_\lambda\\
		&=(q^{2}-1)\lambda_{jk}^{-1}[\lambda_{ij};0][\lambda_{kl};0]\cdot v_\lambda.
\end{align*}
It follows from Lemma \ref{Lem:det} that 
\begin{equation}
((q^{2}-1)\lambda_{jk}^{-1}[\lambda_{il};0]+q\lambda_{jl}^{-1}\lambda_{ik}^{-1})[\lambda_{ij};0 ][\lambda_{kl};0]=q\lambda_{ij}\lambda_{kl}[\lambda_{ij};0 ][\lambda_{kl};0]=0\label{eq:lambda2}
\end{equation}
Therefore $\lambda_{ij}=\pm 1$ or $\lambda_{kl}=\pm 1$, for $1\leq i < j < k < l \leq n+1$. Let $i$ be minimal such that $\lambda_i\neq \pm 1$. Then we have $\lambda_{j}=\pm 1$ for all $j > i+1$. Since $i$ was chosen to be minimal such that $\lambda_i \neq \pm 1$ the other index $j$ such that $\lambda_j\neq 0$ is $j=i+1$. If $\lambda_{i+1}\neq \pm 1$ the previous paragraph implies that $\lambda_i\lambda_{i+1}=\pm q^{-1}.$

This leaves the case such that only $\lambda_{i}\neq \pm 1$. Let $\lambda_i=c \in \mathbb{F} \backslash \{0\}$ and suppose $1<i<n+1$ (if $i$ is not in that range, then $c$ is not fixed in the statement of the theorem).  Let $x_1=E_{-\varepsilon_{i-1}+\varepsilon_{i+1}}E_{-\varepsilon_{i}+\varepsilon_{i+2}}, x_2=E_{-\varepsilon_i+\varepsilon_{i+1}}E_{-\varepsilon_{i-1}+\varepsilon_{i+2}}, y_1=E_{\varepsilon_{i-1}-\varepsilon_{i+1}}E_{\varepsilon_i-\varepsilon_{i+2}}, y_2=E_{\varepsilon_i-\varepsilon_{i+1}}E_{\varepsilon_{i-1}-\varepsilon_{i+2}}.$  We compute:
\begin{align*}
y_1x_1\cdot v_\lambda&=E_{\varepsilon_{i-1}-\varepsilon_{i+1}}E_{\varepsilon_i-\varepsilon_{i+2}}E_{-\varepsilon_{i-1}+\varepsilon_{i+1}}E_{-\varepsilon_{i}+\varepsilon_{i+2}}\cdot v_\lambda\\
		&=[K_{i-1,i+1};0][K_{i,i+2};0]\cdot v_\lambda\\
		&\qquad+E_{\varepsilon_{i-1}-\varepsilon_{i+1}}K_{i}^{-1}(q-q^{-1})E_{-\varepsilon_{i-1}+\varepsilon_i}E_{\varepsilon_{i+1}-\varepsilon_{i+2}}E_{-\varepsilon_{i}+\varepsilon_{i+2}}\cdot v_\lambda\\
		&=[K_{i-1,i+1};0][K_{i,i+2};0]\cdot v_\lambda-(q^2-1)K_{i}^{-1}E_{\varepsilon_{i-1}-\varepsilon_{i+1}}E_{-\varepsilon_{i-1}+\varepsilon_i}qK_{i+1}^{-1}E_{-\varepsilon_{i}+\varepsilon_{i+1}}\cdot v_\lambda\\
		&=[K_{i-1,i+1};0][K_{i,i+2};0]\cdot v_\lambda-(q-q^{-1})K_{i-1,i+2}^{-1}[K_i;0]\cdot v_\lambda\\
		&=\lambda_{i-1}\lambda_{i+1}([c;0]^2-1+c^{-2})\cdot v_\lambda\\
y_1x_2 \cdot v_\lambda
		&=E_{\varepsilon_{i-1}-\varepsilon_{i+1}}E_{\varepsilon_i-\varepsilon_{i+2}}
		E_{-\varepsilon_i+\varepsilon_{i+1}}E_{-\varepsilon_{i-1}+\varepsilon_{i+2}}
			\cdot v_{\lambda}\\
		&=E_{\varepsilon_{i-1}-\varepsilon_{i+1}}E_{-\varepsilon_i+\varepsilon_{i+1}}E_{\varepsilon_i-\varepsilon_{i+2}}E_{-\varepsilon_{i-1}+\varepsilon_{i+2}}\cdot v_{\lambda}\\
		&\qquad+E_{\varepsilon_{i-1}-\varepsilon_{i+1}}q^{-1}K_i^{-1}E_{\varepsilon_{i+1}-\varepsilon_{i+2}}E_{-\varepsilon_{i-1}+\varepsilon_{i+2}}
			\cdot v_\lambda\\
		&\text{(after several steps, using the fact that $\lambda_{i-1}=\pm 1$)}\\
		&=-K_{i,i+2}^{-1}E_{\varepsilon_{i-1}-\varepsilon_{i+1}}E_{-\varepsilon_{i-1}+\varepsilon_{i+1}}\cdot v_\lambda\\
		&=-K_{i,i+2}^{-1}[K_{i-1,i+1};0]\cdot v_\lambda\\
		&=-\lambda_{i-1}\lambda_{i+1}^{-1}c^{-1}[c;0]v_\lambda\\
y_2x_1\cdot v_\lambda
	&=E_{\varepsilon_i-\varepsilon_{i+1}}E_{\varepsilon_{i-1}-\varepsilon_{i+2}}E_{-\varepsilon_{i-1}+\varepsilon_{i+1}}E_{-\varepsilon_{i}+\varepsilon_{i+2}}\cdot 
	v_\lambda\\
		&=E_{\varepsilon_i-\varepsilon_{i+1}}q^{-1}K_{i-1,i+1}^{-1}E_{\varepsilon_{i+1}-\varepsilon_{i+2}}E_{-\varepsilon_{i}+\varepsilon_{i+2}})\cdot v_\lambda\\
		&=-E_{\varepsilon_i-\varepsilon_{i+1}}K_{i-1,i+2}^{-1}E_{-\varepsilon_i+\varepsilon_{i+1}}\cdot v_\lambda\\
		&=-K_{i-1,i+2}^{-1}E_{\varepsilon_i-\varepsilon_{i+1}}E_{-\varepsilon_i+\varepsilon_{i+1}}\cdot v_\lambda\\
		&=-\lambda_{i-1}^{-1}\lambda_{i+1}^{-1}c^{-1}[c;0]v_\lambda\\
y_2x_2\cdot v_\lambda&=E_{\varepsilon_i-\varepsilon_{i+1}}E_{\varepsilon_{i-1}-\varepsilon_{i+2}}E_{-\varepsilon_i+\varepsilon_{i+1}}E_{-\varepsilon_{i-1}+\varepsilon_{i+2}}\cdot v_{\lambda}\\
	&=E_{\varepsilon_i-\varepsilon_{i+1}}E_{-\varepsilon_i+\varepsilon_{i+1}}E_{\varepsilon_{i-1}-\varepsilon_{i+2}}E_{-\varepsilon_{i-1}+\varepsilon_{i+2}}\cdot v_{\lambda}\\
	&=[K_i;0][K_{i-1,i+2};0]\cdot v_\lambda\\
	&=\lambda_{i-1}\lambda_{i+1}[c;0]^2v_\lambda.
\end{align*}
Therefore, by Lemma \ref{Lem:det} we conclude that $[c;0]^2([c;0]^2-1)=0$.  From this we see that $c=\pm 1$, or $c=\pm q^{\pm1}$, which finishes the proof.
\end{proof}

\emph{Example:} Let $V$ be the natural representation of $U_q(\mathfrak{sl}_{n+1})$. The representations $S^r_q(V),r\in \mathbb{Z}_{\geq 0}$ and $\Lambda^i_q(V), i\in \{1,2,\dots, n\}$ of highest weight $(q^r,1,1,\dots, 1)$ and $(1,1,\dots, 1, q, 1,\dots, 1)$ with $q$ in the $i$th slot, are completely pointed (see \cite{Hayashi} for an explicit construction). They are, up to isomorphism and tensoring with one-dimensional modules, the only finite dimensional representations in our classification (recalling that $L(q^r,1,1,\dots, 1)\cong L(1,1,1,\dots, q^r)$ from the Dynkin diagram symmetry).

\emph{Example:} In this example we take $\mathbb{F}=\mathbb{Q}(q)$. Let $L(q+1)$ be the $U_q(\mathfrak{sl}_2)$-module isomorphic to $U_q(\mathfrak{sl}_2)/J$ where $J$ is the left $U_q(\mathfrak{sl}_2)$ ideal generated by the set $\{E,K-(q+1)\cdot 1\}.$ This is a highest weight $U_q(\mathfrak{sl}_2)$-module with highest weight vector $v_0=1+J.$  As a $\mathbb{Q}(q)$ vector space, $L(q+1)$ has basis $\{v_k=F^{(k)}\cdot v_0 | m\in \mathbb{Z}_{\geq 0}\}$, where $F^{(k)}=F^k/[k]_q!$.  The weight of $v_k$ is given by the following computation:
\begin{equation*}
K\cdot v_k=K\cdot( F^{(k)}\cdot v_0)=q^{-2k}(1+q)v_k.
\end{equation*}
We see that each $v_k$ spans a one-dimensional weight space of weight $q^{-2k}(1+q)$.  Also, $L(q+1)$ is irreducible, as we show by the following standard argument.  Suppose that $L(q+1)$ had a proper submodule $V'$.  Then $V'$ would have a maximal vector of weight $q^{-2k}(q+1)$ for some $k>0$.  This maximal vector would have to be proportional to $v_k$ for $k>0$.  But we have the following:
\begin{eqnarray*}
E\cdot v_k&=&E\cdot (F^{(k)}\cdot v_0)\\
	&=&E F^{(k)}\cdot v_0\\
	&=&\left (F^{(k-1)}\frac{Kq^{-k+1}-K^{-1}q^{k-1}}{q-q^{-1}}-F^{(k)}e\right )\cdot v_0\\
	&=&\frac{(1+q)q^{-k+1}-(1+q)^{-1}q^{k-1}}{q-q^{-1}}F^{(k-1)}\cdot v_0\\
	&=&[1+q;-k+1] v_{k-1}\\
\end{eqnarray*}
which is non-zero when $k>0$.

We now consider the $\mathbb{A}$-form of $L(q+1)$, where $\mathbb{A}=\mathbb{Q}[q,q^{-1}].$  Recall that $U_\mathbb{A}(\mathfrak{sl}_2)$ is defined to be the $\mathbb{A}$-subalgebra of $U_q(\mathfrak{sl}_2)$ generated by the elements $E,F,K,$ and $[K;0]=(K-K^{-1})/(q-q^{-1})$.  The $\mathbb{A}$-form of $L(q+1)$ is now the $U_\mathbb{A}(\mathfrak{sl}_2)$-module $L_\mathbb{A}(q+1)=U_\mathbb{A}(\mathfrak{sl}_2)\cdot v_0.$  We have:
\begin{equation*}
v_0^{(k)}=[K;0]^k \cdot v_0=\left (\frac{K-K^{-1}}{q-q^{-1}}\right )^k\cdot v_0=\left (\frac{(q+1)-(q+1)^{-1}}{q-q^{-1}}\right )^k v_0 
\end{equation*}
are elements of $L_\mathbb{A}(q+1)$ for all $k> 0$ that are not in the $\mathbb{A}$-submodule generated by $v_0$.  However, these elements satisfy the following relations over $\mathbb{A}$:
\begin{eqnarray*}
(q-q^{-1})(q+1) v_0^{(k+1)}=((q+1)^2-1)v_0^{(k)}.
\end{eqnarray*}
Notice in this above example that the problem was not that the $\mathbb{A}$-form in question did not exist--indeed one can consider $U_{\mathbb{A}}(\mathfrak{g})$ acting on any $U_q(\mathfrak{g})$-module. The problem was that passing to the $q=1$ limit provided no information about the $U_q(\mathfrak{g})$-module we wanted to study.

Put differently, if we define the \emph{classical limit} of a highest weight $U_q(\mathfrak{g})$-module $L(\lambda)$ as the $U(\mathfrak{g})$-module $L_{\mathbb{A}}(\lambda)/(q-1)L_{\mathbb{A}}(\lambda)$, then the above equation (with $k=0$) shows that $v_0^{(0)}\in (q-1)L_{\mathbb{A}}(q+1)$. Consequently the classical limit of the $U_q(\mathfrak{sl}_2)$-module $L(q+1)$ is trivial.
The conclusion is that the class of completely pointed $U_q(\mathfrak{g})$-modules is richer than in the classical case, since it consists not only of $q$-deformations of completely pointed $U(\mathfrak{g})$-modules.

\subsection{Torsion free modules} Recall that $U_q(\mathfrak{sl}_2)$ is embedded in $U_q(\mathfrak{gl}_2)$ by adding the invertible element $\bar K_2$ satisfying the relations $\bar K_2 E \bar K_2^{-1}=q^{-1}E$, $\bar K_2 F \bar K_2^{-1}=qF$, and $ K^{\pm}\bar K_2^{\pm}=\bar K_2^{\pm}K^{\pm}$ and defining $\bar K_1^{\pm}=(K\bar K_2^{-1})^{\pm}$.

\begin{lemma}
Let $V$ be an irreducible, torsion free, completely pointed $U_q(\mathfrak{sl}_{2})$-module and $v_\lambda$ a weight vector in $V$.  The action of $U_q(\mathfrak{sl}_{2})$  on $V$ can be extended to a $U_q(\mathfrak{gl}_{2})$ action such that the following relations hold:
\begin{equation*}
FE\cdot v_\lambda=[\bar K_{1};1][\bar K_2;0]\cdot v_\lambda.
\end{equation*}
\end{lemma}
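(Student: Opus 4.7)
The plan is to extend the $U_q(\mathfrak{sl}_2)$-action on $V$ to a $U_q(\mathfrak{gl}_2)$-action by letting $\bar K_1,\bar K_2$ act diagonally on weight vectors with a single free scalar parameter, and then choose that parameter so that the identity $FE\cdot v_\lambda = [\bar K_1;1][\bar K_2;0]\cdot v_\lambda$ holds. The key observation will be that once the identity is arranged at one weight vector, the $U_q(\mathfrak{sl}_2)$ commutation relations force it at every weight vector, so only a single scalar equation actually needs to be solved.

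Since $V$ is irreducible, torsion free and completely pointed, both $E$ and $F$ are bijective between consecutive one-dimensional weight spaces, so $V$ has weight support a single $q^{2\mathbb{Z}}$-coset. I would fix basis weight vectors $v_k$ ($k\in\mathbb{Z}$) with $Kv_k = q^{2k}\lambda v_k$ and write $FEv_k = c_k v_k$; the relation $[E,F]=[K;0]$ then yields the recursion
\[
c_{k-1}-c_k \;=\; \frac{q^{2k}\lambda - q^{-2k}\lambda^{-1}}{q-q^{-1}}.
\]
To extend the action I would declare $\bar K_1 v_k = \mu_k v_k$ and $\bar K_2 v_k = \nu_k v_k$. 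The commutation of $\bar K_i$ with $E,F$ forces $\mu_k = q^k\mu$ and $\nu_k = q^{-k}\nu$, while $K=\bar K_1\bar K_2^{-1}$ forces $\mu/\nu=\lambda$, so only a single scalar $\nu\in\mathbb{F}^\times$ remains free.

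A short computation shows that $[\mu_k;1][\nu_k;0]$ satisfies exactly the same recursion in $k$ as $c_k$, so the required identity $c_k = [\mu_k;1][\nu_k;0]$ holds for every $k$ as soon as it holds at $k=0$. The problem thus reduces to solving $c_0 = [\lambda\nu;1][\nu;0]$ for $\nu$; after clearing denominators and setting $X = \nu^2$, this becomes the quadratic
\[
q\lambda X^2 - A\,X + q^{-1}\lambda^{-1} = 0, \qquad A = (q-q^{-1})^2 c_0 + q\lambda + q^{-1}\lambda^{-1}\in\mathbb{F},
\]
whose discriminant admits a square root in $\mathbb{F}$, as does the resulting $X$, by the standing hypothesis that $\mathbb{F}$ is closed under quadratic extensions.

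The main obstacle is precisely this last step, producing the scalar $\nu$, and that is exactly where the quadratic-closure assumption on $\mathbb{F}$ is used; without it one would be forced to enlarge $\mathbb{F}$ by a quadratic extension. Once $\nu$ is fixed, the remaining defining relations of $U_q(\mathfrak{gl}_2)$ are built into the construction and the $U_q(\mathfrak{sl}_2)$-relations are untouched, so the conclusion on $v_\lambda$ (and in fact on every weight vector) follows from the recursion already verified. A more conceptual restatement is that $\Omega := FE - [\bar K_1;1][\bar K_2;0]$ is central in $U_q(\mathfrak{gl}_2)$, hence acts as a scalar depending on $\nu$ on the irreducible module $V$, and the role of the quadratic equation is to arrange for that scalar to vanish.
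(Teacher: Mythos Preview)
Your argument is correct and lands on the same endpoint as the paper: solve a quadratic for the $\bar K_2$-eigenvalue using the standing hypothesis that $\mathbb{F}$ is closed under quadratic extensions, then propagate to all weights. The paper's proof is a bit more direct: it invokes the Casimir element
\[
c \;=\; FE + \frac{qK+q^{-1}K^{-1}}{(q-q^{-1})^2},
\]
which acts as a scalar $\tau$ by Schur, and then solves $\tau=\dfrac{q\mu_1\mu_2+(q\mu_1\mu_2)^{-1}}{(q-q^{-1})^2}$ together with $\lambda=\mu_1\mu_2^{-1}$; this yields $FE\cdot v_\lambda=[\mu_1;1][\mu_2;0]\,v_\lambda$ at every weight in one stroke, bypassing your recursion in $k$. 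Your closing remark is exactly this observation in disguise: one checks that
\[
\Omega \;=\; FE-[\bar K_1;1][\bar K_2;0]\;=\; c-\frac{q\bar K_1\bar K_2+(q\bar K_1\bar K_2)^{-1}}{(q-q^{-1})^2},
\]
so $\Omega$ is central because $c$ and $\bar K_1\bar K_2$ are. Thus your recursion step and the paper's Casimir step are two packagings of the same identity; the Casimir formulation just saves you from verifying the recursion by hand.
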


\begin{proof}
Let $c$ be the Casimir element of $U_q(\mathfrak{sl}_2)$:
\begin{equation*}
c=FE+\frac{qK+q^{-1}K^{-1}}{(q-q^{-1})^2}
\end{equation*}
Since $V$ is completely pointed and irreducible, $c$ acts as a scalar $\tau \in \mathbb{F}.$ Therefore, on a weight vector $v_\lambda$ we have:
\begin{equation*}
FE\cdot v_\lambda=\left (\tau-\frac{q\lambda+(q\lambda)^{-1}}{(q-q^{-1})^2}\right )v_\lambda.
\end{equation*}
Since $\mathbb{F}$ is closed under quadratic extensions there are $\mu_1$ and $\mu_2$ satisfying the two equations $\tau=\frac{q\mu_1\mu_2+(q\mu_1\mu_2)^{-1}}{(q-q^{-1})^2}$ and $\lambda=\mu_1\mu_2^{-1}$. This gives:
\begin{align*}
FE\cdot v_\lambda&=\frac{q\mu_1-(q\mu_1)^{-1}}{q-q^{-1}}\left (\frac{\mu_2-\mu_2^{-1}}{q-q^{-1}}\right )v_\lambda.
\end{align*}
We can make $V$ into a $U_q(\mathfrak{gl}_2)$-module by letting $\bar{K}_2$ act as $\mu_2$ on $v_\lambda$ and demanding that the additional relations of $U_q(\mathfrak{gl}_2)$ be satisfied, i.e. $\bar K_2E\bar K_2^{-1}\cdot v_\lambda=q^{-1}E v_\lambda$ and $\bar K_2 F \bar K_2^{-1}\cdot v_\lambda=qF\cdot v_\lambda.$ Then $\bar{K}_1$ acts as $\mu_1$ which gives the desired result.
\end{proof}

As before, $U_q(\mathfrak{sl}_{n+1})$ is embedded in $U_q(\mathfrak{gl}_{n+1})$ by adding the element $\bar{K}_2$ and defining inductively $\bar K_1=K_1\bar K_2$ and $\bar K_{i+1}=K_{i+1}^{-1}\bar{K}_{i}$.

\begin{theorem}\label{th:torsionfree}
Let $V$ be a irreducible, torsion free, completely pointed $U_q(\mathfrak{sl}_{n+1})$-module and $v_\lambda$ a weight vector in $V$.  The action of $U_q(\mathfrak{sl}_{n+1})$  on $V$ can be extended to a $U_q(\mathfrak{gl}_{n+1})$ action such that the following relations hold:
\begin{equation*}
E_{-\varepsilon_i+\varepsilon_j}E_{\varepsilon_i-\varepsilon_j}\cdot v_\lambda=[\bar K_{i};1][\bar K_j;0]\cdot v_\lambda.
\end{equation*}
\end{theorem}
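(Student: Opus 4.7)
The idea is to extend the strategy of the preceding $U_q(\mathfrak{sl}_2)$-lemma in three steps: extend the $U_q(\mathfrak{sl}_{n+1})$-action on $V$ to a $U_q(\mathfrak{gl}_{n+1})$-action, reduce the desired identity to a single weight vector, and verify it there by induction on $j-i$. For the first step, since $V$ is a weight module for $U_q(\mathfrak{sl}_{n+1})$ with one-dimensional weight spaces and $\bar K_2$ must commute with each $K_i$, any extension has $\bar K_2$ acting as a scalar on each weight space; the inductive relations defining $\bar K_j$ from $\bar K_2$, together with the commutations $\bar K_2 E_k=q^{\delta_{2k}-\delta_{2,k+1}}E_k\bar K_2$ and the analogue for $F_k$, determine the eigenvalue of $\bar K_2$ on every weight space of $V$ from its value $\mu_2^*\in\mathbb{F}^\times$ on a single fixed vector $v^*\in V_{\lambda^*}$. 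Well-definedness across different paths in the weight diagram is automatic (only the net weight shift matters when commuting past $\bar K_2$), and irreducibility of $V$ ensures every weight is reachable from $\lambda^*$.

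For the second step, set $Z_{ij}:=E_{-\varepsilon_i+\varepsilon_j}E_{\varepsilon_i-\varepsilon_j}-[\bar K_i;1][\bar K_j;0]$ for $i<j$. Being weight-zero, $Z_{ij}$ acts on $V_\lambda$ as a scalar $c_{ij}(\lambda)$. Using \eqref{eq:qg3}--\eqref{eq:qg4} to commute $E_{\pm(\varepsilon_i-\varepsilon_j)}$ past $E_k,F_k$, together with the $q$-shift identities for $[\bar K_m;s]$ implied by the $U_q(\mathfrak{gl}_{n+1})$ Cartan relations, one verifies $[Z_{ij},E_k]\cdot v_\mu=0=[Z_{ij},F_k]\cdot v_\mu$ on every weight vector of $V$. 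This generalizes the fact already needed in the preceding $\mathfrak{sl}_2$-lemma, namely that $FE-[\bar K_1;1][\bar K_2;0]$ equals the Casimir minus a central Cartan element. Consequently $c_{ij}$ is independent of $\lambda$, and the theorem reduces to showing $Z_{ij}\cdot v^*=0$ for every $i<j$ at the single vector $v^*$.

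For the third step, first take $j=i+1$: the $U_q(\mathfrak{sl}_2)$-Casimir computation of the preceding lemma, applied at $v^*$, gives $F_iE_i\cdot v^*=\sigma_i v^*$ for some $\sigma_i\in\mathbb{F}$, and the target identity becomes a quadratic equation in the product $\mu_i^*\mu_{i+1}^*$, solvable in $\mathbb{F}$ by the quadratic-closure assumption; fix $\mu_2^*$ so this holds for $i=1$. For $j-i>1$, expand $E_{\pm(\varepsilon_i-\varepsilon_k)}$ via \eqref{eq:qg1}--\eqref{eq:qg2} as $q$-commutators of $E_{\pm(\varepsilon_i-\varepsilon_j)}$ and $E_{\pm(\varepsilon_j-\varepsilon_k)}$ for any intermediate $j$, normal-order using \eqref{eq:qg3}--\eqref{eq:qg4}, and invoke the inductive hypothesis to collapse $Z_{ik}\cdot v^*$ into a pure Cartan expression, which is then matched to $[\bar K_i;1][\bar K_k;0]\cdot v^*$ by elementary $[\bar K_m;s]$-manipulation.

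The main obstacle is the compatibility in the base case: showing that one choice of $\mu_2^*$ simultaneously satisfies all $n$ simple-root identities $F_iE_i\cdot v^*=[\bar K_i;1][\bar K_{i+1};0]\cdot v^*$ for $i=1,\ldots,n$. We expect to handle this by propagating the $i=1$ identity from $v^*$ to carefully chosen neighboring weight vectors (via Step 2) and reading off the $i>1$ identities from the induced $\bar K_j$-eigenvalues, using the Serre-type relations in $U_q(\mathfrak{sl}_{n+1})$ to obtain the corresponding polynomial relations on the scalars $\sigma_i$. Once these compatibilities are in hand, the inductive expansion in $j-i$ is computationally lengthy but routine.
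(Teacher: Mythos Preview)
Your Step~2 is where the argument breaks. The claim that $[Z_{ij},E_k]\cdot v_\mu=0$ can be ``verified'' from \eqref{eq:qg3}--\eqref{eq:qg4} and Cartan shifts is false already for $j=i+1$ and $k=i+1$: since $[F_i,E_{i+1}]=0$ one gets
\[
[Z_{i,i+1},E_{i+1}]=F_i[E_i,E_{i+1}]-E_{i+1}\,[\bar K_i;1]\big([\bar K_{i+1};1]-[\bar K_{i+1};0]\big),
\]
and the first summand does not reduce to a Cartan element in $U_q(\mathfrak{gl}_{n+1})$. (Your $\mathfrak{sl}_2$ analogy is misleading: $F_iE_i$ plus a Cartan piece is the Casimir of the copy of $U_q(\mathfrak{sl}_2)$ generated by $E_i,F_i,K_i$, but that element does \emph{not} commute with $E_{i\pm 1}$.) On a weight vector $v_\mu$, writing $E_iE_{i+1}v_\mu=\kappa\,E_{i+1}E_iv_\mu$ by complete pointedness, the first summand becomes $(\kappa-1)z_{i,i+1}(\mu)\,E_{i+1}v_\mu$, so the vanishing of $[Z_{i,i+1},E_{i+1}]v_\mu$ is precisely a nontrivial relation between the module constants $\kappa$ and $z_{i,i+1}(\mu)$ --- exactly what the theorem is about. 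Your Step~3 has the same defect: applying the inductive hypothesis to $E_{-\varepsilon_j+\varepsilon_k}E_{\varepsilon_j-\varepsilon_k}$ requires evaluating it on $E_{\varepsilon_i-\varepsilon_j}v^\ast$, not on $v^\ast$, which sends you back to Step~2. And your proposed resolution of the ``main obstacle'' by propagating via Step~2 is then circular.

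The paper takes the route you relegate to that obstacle and makes it the whole proof. It introduces the scalars $\kappa_{ijk}$ defined by $E_{\varepsilon_i-\varepsilon_j}E_{\varepsilon_j-\varepsilon_k}v_\lambda=\kappa_{ijk}E_{\varepsilon_i-\varepsilon_k}v_\lambda$ (well-defined because $V$ is completely pointed and torsion free), and by hitting the relation $(E_{\varepsilon_i-\varepsilon_j}E_{\varepsilon_j-\varepsilon_k}-\kappa_{ijk}E_{\varepsilon_i-\varepsilon_k})v_\lambda=0$ with various $E_{-\alpha}E_\alpha$ derives an overdetermined polynomial system in the $\kappa_{ijk}$, the $z_{ij}$, and the $\lambda_{ij}$. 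For $n\ge 3$ four independent relations are needed (two suffice classically), and solving them forces $\phi_{ijl}=\phi_{ijk}$, $\phi_{ikl}=\phi_{jkl}=\lambda_{jk}^2\phi_{ijk}$, after which all $z_{ij}$ are determined by the single parameter $\phi_{123}$; a square root of $(q\phi_{123})^{-1}$ then gives $\mu_2$. No a~priori commutativity of $Z_{ij}$ is used: the simultaneous compatibility of all the $z_{ij}$ is the \emph{output} of this computation, not an input.
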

\begin{proof}
If $n=1$ then the result is the previous lemma. So assume $n>1$.

Since $V$ is completely pointed and torsion free we have, for $1\leq i < j <k\leq n+1$, $E_{\varepsilon_{i}-\varepsilon_{j}}E_{\varepsilon_{j}-\varepsilon_{k}}\cdot v_\lambda=\kappa_{ijk} E_{\varepsilon_{i}-\varepsilon_{k}}\cdot v_\lambda,$ for some $\kappa_{ijk} \in \mathbb{F}$. 
Let $z_{ij}\in \mathbb{F}, 1\leq i<j\leq n+1$ be the scalars by which $E_{-\varepsilon_i+\varepsilon_j}E_{\varepsilon_i-\varepsilon_j}$ act on $v_\lambda$.
Recall that $K_{ij}=\prod_{k=i}^{j-1}K_k$ and $\lambda_{ij}=\prod_{k=i}^{j-1}\lambda_k$.

We compute:
\begin{align*}
0&=E_{-\varepsilon_{i}+\varepsilon_{j}}E_{\varepsilon_{i}-\varepsilon_{j}}(E_{\varepsilon_{i}-\varepsilon_{j}}E_{\varepsilon_{j}-\varepsilon_{k}}-\kappa_{ijk} E_{\varepsilon_{i}-\varepsilon_{k}})\cdot v_\lambda\\
	&=E_{-\varepsilon_{i}+\varepsilon_{j}}E_{\varepsilon_{i}-\varepsilon_{j}}(q^{-1}E_{\varepsilon_{j}-\varepsilon_{k}}E_{\varepsilon_{i}-\varepsilon_{j}}-(\kappa_{ijk}+1) E_{\varepsilon_{i}-\varepsilon_{k}})\cdot v_\lambda\\
	&=E_{-\varepsilon_{i}+\varepsilon_{j}}(q^{-2}E_{\varepsilon_{j}-\varepsilon_{k}}E_{\varepsilon_{i}-\varepsilon_{j}}-(q(\kappa_{ijk}+1)+q^{-1}) E_{\varepsilon_{i}-\varepsilon_{k}})E_{\varepsilon_{i}-\varepsilon_{j}}\cdot v_\lambda\\
	&=(-q^{-1}[K_{ij};0]+K_{ij}^{-1}(\kappa_{ijk}+1))E_{\varepsilon_{j}-\varepsilon_{k}}E_{\varepsilon_{i}-\varepsilon_{j}}\cdot v_\lambda\\
	&\qquad+(q^{-2}E_{\varepsilon_{j}-\varepsilon_{k}}E_{\varepsilon_{i}-\varepsilon_{j}}-(q(\kappa_{ijk}+1)+q^{-1}) E_{\varepsilon_{i}-\varepsilon_{k}})z_{ij}\cdot v_\lambda\\
	&=(\kappa_{ijk}+1)(-[\lambda_{ij};1]+(\kappa_{ijk}+1)\lambda_{ij}^{-1})E_{\varepsilon_{i}-\varepsilon_{k}}\cdot v_\lambda-((q-q^{-1})\kappa_{ijk} +q)z_{ij}E_{\varepsilon_{i}-\varepsilon_{k}}\cdot v_\lambda\\
	&=\frac{\phi_{ijk}-q^{-1}}{q-q^{-1}}\left (\frac{\lambda_{ij}^{-1}\phi_{ijk}-q\lambda_{ij}}{q-q^{-1}}\right )E_{\varepsilon_{i}-\varepsilon_{k}}\cdot v_\lambda-\phi_{ijk} z_{ij}E_{\varepsilon_{i}-\varepsilon_{k}}\cdot v_\lambda\tag{*}
\end{align*}
and
\begin{align*}
0&=E_{-\varepsilon_{j}+\varepsilon_{k}}E_{\varepsilon_{j}-\varepsilon_{k}}(E_{\varepsilon_{i}-\varepsilon_{j}}E_{\varepsilon_{j}-\varepsilon_{k}}-\kappa_{ijk} E_{\varepsilon_{i}-\varepsilon_{k}})\cdot v_\lambda\\
	&=E_{-\varepsilon_{j}+\varepsilon_{k}}(qE_{\varepsilon_{i}-\varepsilon_{j}}E_{\varepsilon_{j}-\varepsilon_{k}}+(q-\kappa_{ijk} q^{-1})E_{\varepsilon_{i}-\varepsilon_{k}})E_{\varepsilon_{j}-\varepsilon_{k}}\cdot v_\lambda\\
	&=(-[K_{jk};0]-\kappa_{ijk} q^{-1}K_{jk})E_{\varepsilon_{i}-\varepsilon_{j}}E_{\varepsilon_{j}-\varepsilon_{k}}\cdot v_\lambda\\
	&\qquad+(qE_{\varepsilon_{i}-\varepsilon_{j}}E_{\varepsilon_{j}-\varepsilon_{k}}+(q-\kappa_{ijk} q^{-1})E_{\varepsilon_{i}-\varepsilon_{k}})E_{-\varepsilon_{j}+\varepsilon_{k}}E_{\varepsilon_{j}-\varepsilon_{k}}\cdot v_\lambda\\
	&=\kappa_{ijk}(-[\lambda_{jk};1]-\kappa_{ijk} \lambda_{jk})E_{\varepsilon_{i}-\varepsilon_{k}}\cdot v_\lambda+((q- q^{-1})\kappa_{ijk}+q)z_{jk}E_{\varepsilon_{i}-\varepsilon_{k}}\cdot v_\lambda\\
	&=\frac{\phi_{ijk}-q}{q-q^{-1}}\left (\frac{q^{-1}\lambda_{jk}^{-1}-\phi_{ijk}\lambda_{jk}}{q-q^{-1}}\right )E_{\varepsilon_{i}-\varepsilon_{k}}\cdot v_\lambda+\phi_{ijk} z_{jk}E_{\varepsilon_{i}-\varepsilon_{k}}\cdot v_\lambda\tag{**}.
\end{align*}
where $\phi_{i_1,i_2,i_3}=q+(q- q^{-1})\kappa_{i_i,i_2,i_3}$ for arbitrary $i_1,i_2,i_3$. If $\phi_{ijk}=0$ then (*) gives the following:
\begin{equation*}
0=\frac{\lambda_{ij}}{(q-q^{-1})^2}
\end{equation*}
which is a contradiction (since every $K_i$ acts as an invertible scalar). Therefore, $\phi_{ijk}\neq 0$. In addition, we have:
\begin{align*}
z_{ij}z_{jk}v_{\lambda}&=E_{\varepsilon_j-\varepsilon_i}E_{\varepsilon_i-\varepsilon_j}E_{\varepsilon_k-\varepsilon_j}E_{\varepsilon_j-\varepsilon_k}\cdot v_\lambda\\
	&=E_{\varepsilon_j-\varepsilon_i}E_{\varepsilon_k-\varepsilon_j}E_{\varepsilon_i-\varepsilon_j}E_{\varepsilon_j-\varepsilon_k}\cdot v_\lambda\\
	&=\kappa_{ijk}E_{\varepsilon_j-\varepsilon_i}E_{\varepsilon_k-\varepsilon_j}E_{\varepsilon_i-\varepsilon_k}\cdot v_\lambda\\
	&=(q^{-1}\kappa_{ijk}E_{\varepsilon_k-\varepsilon_j}E_{\varepsilon_j-\varepsilon_i}E_{\varepsilon_i-\varepsilon_k}+q^{-1}\kappa_{ijk}E_{\varepsilon_k-\varepsilon_i}E_{\varepsilon_i-\varepsilon_k})\cdot v_\lambda\\
	&=q^{-1}\kappa_{ijk}(\kappa_{ijk}z_{ik}-(q-q^{-1})z_{ij}z_{jk})v_\lambda+q^{-1}\kappa_{ijk}z_{ik}v_\lambda\\
\end{align*}
whence
\begin{equation*}
\kappa_{ijk}(\kappa_{ijk}+1)z_{ik}=\phi_{ijk}z_{ij}z_{jk}. \tag{***}
\end{equation*}
From (*), (**), and (***) we deduce:
\begin{align}
z_{ij}&=\phi_{ijk}^{-1}\left (\frac{\phi_{ijk}-q^{-1}}{q-q^{-1}}\right )\left (\frac{\lambda_{ij}^{-1}\phi_{ijk}-q\lambda_{ij}}{q-q^{-1}}\right )\label{eq:z1}\\
z_{jk}&=-\phi_{ijk}^{-1}\left (\frac{\phi_{ijk}-q}{q-q^{-1}}\right )\left (\frac{q^{-1}\lambda_{jk}^{-1}-\phi_{ijk}\lambda_{jk}}{q-q^{-1}}\right )\label{eq:z2}\\
z_{ik}&=-\phi_{ijk}^{-1}\left (\frac{\lambda_{ij}^{-1}\phi_{ijk}-q\lambda_{ij}}{q-q^{-1}}\right )\left (\frac{q^{-1}\lambda_{jk}^{-1}-\phi_{ijk}\lambda_{jk}}{q-q^{-1}}\right )\label{eq:z3}.
\end{align}
This covers the case where $n=2$, so suppose $n>2$. If $1\leq i<j<k<l\leq n+1$, then \eqref{eq:z2} and \eqref{eq:z3} give:
\begin{multline}
-\phi_{ijl}^{-1}\left (\frac{\phi_{ijl}-q}{q-q^{-1}}\right )\left (\frac{q^{-1}\lambda_{jl}^{-1}-\phi_{ijl}\lambda_{jl}}{q-q^{-1}}\right )\\
=-\phi_{jkl}^{-1}\left (\frac{\lambda_{jk}^{-1}\phi_{jkl}-q\lambda_{jk}}{q-q^{-1}}\right )\left (\frac{q^{-1}\lambda_{kl}^{-1}-\phi_{jkl}\lambda_{kl}}{q-q^{-1}}\right )
\end{multline}
which yields:
\begin{equation}
(\phi_{jkl}\phi_{ijl}-\lambda_{kl}^{-2})(\lambda_{jk}^2\phi_{ijl}-\phi_{jkl})=0\label{eq:phi1}.
\end{equation}
A similar argument using \eqref{eq:z1} and \eqref{eq:z3} gives:
\begin{equation}
(\phi_{ikl}\phi_{ijk}-\lambda_{ij}^{2})(\lambda_{jk}^2\phi_{ijk}-\phi_{ikl})=0\label{eq:phi2}.
\end{equation}

We compute:
\begin{align*}
\kappa_{jkl}E_{\varepsilon_j-\varepsilon_l}\cdot v_\lambda &=E_{\varepsilon_j-\varepsilon_k}E_{\varepsilon_k-\varepsilon_l}\cdot v_\lambda\\
&=(-E_{\varepsilon_j-\varepsilon_l}+q^{-1}E_{\varepsilon_k-\varepsilon_l}E_{\varepsilon_j-\varepsilon_k})\cdot v_\lambda
\end{align*}
hence $q(\kappa_{jkl}+1)E_{\varepsilon_j-\varepsilon_l}\cdot v_\lambda=E_{\varepsilon_k-\varepsilon_l}E_{\varepsilon_j-\varepsilon_k}\cdot v_\lambda$. Therefore we have:
\begin{align*}
q\kappa_{ijl}(\kappa_{jkl}+1)E_{\varepsilon_i-\varepsilon_l}\cdot v_\lambda
&=q(\kappa_{jkl}+1)E_{\varepsilon_i-\varepsilon_j}E_{\varepsilon_j-\varepsilon_l}\cdot v_\lambda\\
&=E_{\varepsilon_k-\varepsilon_l}E_{\varepsilon_i-\varepsilon_j}E_{\varepsilon_j-\varepsilon_k}\cdot v_\lambda\\
&=\kappa_{ijk}E_{\varepsilon_k-\varepsilon_l}E_{\varepsilon_i-\varepsilon_k}\cdot v_\lambda\\
&=q\kappa_{ijk}(\kappa_{ikl}+1)E_{\varepsilon_i-\varepsilon_l}\cdot v_\lambda.
\end{align*}
Therefore, since $E_{\varepsilon_i-\varepsilon_l}\cdot v_\lambda\neq 0$, we have:
\begin{equation}
\kappa_{ijl}(\kappa_{jkl}+1)=\kappa_{ijk}(\kappa_{ikl}+1)\label{eq:kappa1}
\end{equation}
and analogously:
\begin{equation}
\kappa_{jil}(\kappa_{ikl}+1)=\kappa_{jik}(\kappa_{jkl}+1).\label{eq:kappa1p}
\end{equation}

Now we derive more relations for $\kappa_{i_1,i_2,i_3}$:
\begin{align*}
E_{\varepsilon_i-\varepsilon_j}E_{\varepsilon_j-\varepsilon_l}E_{\varepsilon_j-\varepsilon_k}\cdot v_\lambda&=(-E_{\varepsilon_i-\varepsilon_l}E_{\varepsilon_j-\varepsilon_k}+q^{-1}E_{\varepsilon_j-\varepsilon_l}E_{\varepsilon_i-\varepsilon_j}E_{\varepsilon_j-\varepsilon_k})\cdot v_\lambda\\
	&=(-E_{\varepsilon_i-\varepsilon_l}E_{\varepsilon_j-\varepsilon_k}+q^{-1}\kappa_{ijk}E_{\varepsilon_j-\varepsilon_l}E_{\varepsilon_i-\varepsilon_k})\cdot v_\lambda\\
	&=(-q^{-1}(q-q^{-1})\kappa_{ijk}-1)E_{\varepsilon_i-\varepsilon_l}E_{\varepsilon_j-\varepsilon_k}\cdot v_\lambda\\
	&\qquad +q^{-1}\kappa_{ijk}E_{\varepsilon_i-\varepsilon_k}E_{\varepsilon_j-\varepsilon_l}\cdot v_\lambda
\end{align*}
and
\begin{align*}
E_{\varepsilon_i-\varepsilon_j}E_{\varepsilon_j-\varepsilon_l}E_{\varepsilon_j-\varepsilon_k}\cdot v_\lambda&=q^{-1}E_{\varepsilon_i-\varepsilon_j}E_{\varepsilon_j-\varepsilon_k}E_{\varepsilon_j-\varepsilon_l}\cdot v_\lambda\\
	&=(-q^{-1}E_{\varepsilon_i-\varepsilon_k}E_{\varepsilon_j-\varepsilon_l}+q^{-2}E_{\varepsilon_j-\varepsilon_k}E_{\varepsilon_i-\varepsilon_j}E_{\varepsilon_j-\varepsilon_l})\cdot v_\lambda\\
	&=(-q^{-1}E_{\varepsilon_i-\varepsilon_k}E_{\varepsilon_j-\varepsilon_l}+q^{-2}\kappa_{ijl}E_{\varepsilon_j-\varepsilon_k}E_{\varepsilon_i-\varepsilon_l})\cdot v_\lambda.
\end{align*}
Therefore:
\begin{align*}
0&=\kappa_{jkl}(q^{-1}(q-q^{-1})\kappa_{ijk}+1+q^{-2}\kappa_{ijl})E_{\varepsilon_i-\varepsilon_l}E_{\varepsilon_j-\varepsilon_k}\cdot v_\lambda\\
	&\qquad-q^{-1}\kappa_{jkl}(\kappa_{ijk}+1)E_{\varepsilon_i-\varepsilon_k}E_{\varepsilon_j-\varepsilon_l}\cdot v_\lambda\\
	&=\kappa_{jkl}(q^{-1}(q-q^{-1})\kappa_{ijk}+1+q^{-2}\kappa_{ijl})E_{\varepsilon_i-\varepsilon_l}E_{\varepsilon_j-\varepsilon_k}\cdot v_\lambda\\
	&\qquad -q^{-1}(\kappa_{ijk}+1)E_{\varepsilon_i-\varepsilon_k}E_{\varepsilon_j-\varepsilon_k}E_{\varepsilon_k-\varepsilon_l}\cdot v_\lambda\\
	&=\kappa_{jkl}(q^{-1}(q-q^{-1})\kappa_{ijk}+1+q^{-2}\kappa_{ijl})E_{\varepsilon_i-\varepsilon_l}E_{\varepsilon_j-\varepsilon_k}\cdot v_\lambda\\
		&\qquad -\kappa_{ikl}(\kappa_{ijk}+1)E_{\varepsilon_i-\varepsilon_l}E_{\varepsilon_j-\varepsilon_k}\cdot v_\lambda
\end{align*}
which gives:
\begin{equation}
\kappa_{jkl}(q^{-1}(q-q^{-1})\kappa_{ijk}+1+q^{-2}\kappa_{ijl})=\kappa_{ikl}(\kappa_{ijk}+1).
\end{equation}
Subtracting \eqref{eq:kappa1} from the above we see:
\begin{align}
\kappa_{jkl}(q^{-1}(q-q^{-1})\kappa_{ijk}+1-q^{-1}(q-q^{-1})\kappa_{ijl})-\kappa_{ijl}&=\kappa_{ikl}-\kappa_{ijk}\\
(q^{-1}\phi_{jkl}-1)(\phi_{ijk}-\phi_{ijl})+\phi_{jkl}-\phi_{ijl}&=\phi_{ikl}-\phi_{ijk}\\
q^{-1}\phi_{jkl}(\phi_{ijk}-\phi_{ijl})+\phi_{jkl}-\phi_{ikl}&=0.\label{eq:kappa3}
\end{align}

We repeat the same argument with the indices $i$ and $j$ transposed to obtain:
\begin{align*}
E_{\varepsilon_j-\varepsilon_i}E_{\varepsilon_i-\varepsilon_l}E_{\varepsilon_i-\varepsilon_k}\cdot v_\lambda&=(-q^{-1}K_{ij}^{-1}E_{\varepsilon_j-\varepsilon_l}E_{\varepsilon_i-\varepsilon_k}+E_{\varepsilon_i-\varepsilon_l}E_{\varepsilon_j-\varepsilon_i}E_{\varepsilon_i-\varepsilon_k})\cdot v_\lambda\\
	&=(-q^{-1}\lambda_{ij}^{-1}E_{\varepsilon_j-\varepsilon_l}E_{\varepsilon_i-\varepsilon_k}+\kappa_{jik}E_{\varepsilon_i-\varepsilon_l}E_{\varepsilon_j-\varepsilon_k})\cdot v_\lambda\\
	&=-q^{-1}\lambda_{ij}^{-1}E_{\varepsilon_i-\varepsilon_k}E_{\varepsilon_j-\varepsilon_l}\cdot v_\lambda\\
	&\qquad+(\kappa_{jik}+q^{-1}(q-q^{-1})\lambda_{ij}^{-1})E_{\varepsilon_i-\varepsilon_l}E_{\varepsilon_j-\varepsilon_k}\cdot v_\lambda
\end{align*}
and
\begin{align*}
E_{\varepsilon_j-\varepsilon_i}E_{\varepsilon_i-\varepsilon_l}E_{\varepsilon_i-\varepsilon_k}\cdot v_\lambda&=q^{-1}E_{\varepsilon_j-\varepsilon_i}E_{\varepsilon_i-\varepsilon_k}E_{\varepsilon_i-\varepsilon_l}\cdot v_\lambda\\
	&=(-q^{-2}K_{ij}^{-1}E_{\varepsilon_j-\varepsilon_k}E_{\varepsilon_i-\varepsilon_l}+q^{-1}E_{\varepsilon_i-\varepsilon_k}E_{\varepsilon_j-\varepsilon_i}E_{\varepsilon_i-\varepsilon_l})\cdot v_\lambda\\
	&=(-q^{-2}\lambda_{ij}^{-1}E_{\varepsilon_i-\varepsilon_l}E_{\varepsilon_j-\varepsilon_k}+q^{-1}\kappa_{jil}E_{\varepsilon_i-\varepsilon_k}E_{\varepsilon_j-\varepsilon_l})\cdot v_\lambda.
\end{align*}
Hence:
\begin{align*}
0&=\kappa_{ikl}(\kappa_{jik}+\lambda_{ij}^{-1})E_{\varepsilon_i-\varepsilon_l}E_{\varepsilon_j-\varepsilon_k}\cdot v_\lambda-q^{-1}\kappa_{ikl}(\kappa_{jil}+\lambda_{ij}^{-1})E_{\varepsilon_i-\varepsilon_k}E_{\varepsilon_j-\varepsilon_l}\cdot v_\lambda\\
	&=(\kappa_{jik}+\lambda_{ij}^{-1})E_{\varepsilon_j-\varepsilon_k}E_{\varepsilon_i-\varepsilon_k}E_{\varepsilon_k-\varepsilon_l}\cdot v_\lambda-q^{-1}\kappa_{ikl}(\kappa_{jil}+\lambda_{ij}^{-1})E_{\varepsilon_i-\varepsilon_k}E_{\varepsilon_j-\varepsilon_l}\cdot v_\lambda\\
	&=q^{-1}\kappa_{jkl}(\kappa_{jik}+\lambda_{ij}^{-1})E_{\varepsilon_i-\varepsilon_k}E_{\varepsilon_j-\varepsilon_l}\cdot v_\lambda-q^{-1}\kappa_{ikl}(\kappa_{jil}+\lambda_{ij}^{-1})E_{\varepsilon_i-\varepsilon_k}E_{\varepsilon_j-\varepsilon_l}\cdot v_\lambda\\
\end{align*}
which gives:
\begin{equation}
\kappa_{jkl}(\kappa_{jik}+\lambda_{ij}^{-1})=\kappa_{ikl}(\kappa_{jil}+\lambda_{ij}^{-1}).\label{eq:kappa4}
\end{equation}
Subtracting \eqref{eq:kappa1p} from the above gives:
\begin{align}
\lambda_{ij}^{-1}\kappa_{jkl}-\kappa_{jik}=\lambda_{ij}^{-1}\kappa_{ikl}-\kappa_{jil}\\
\lambda_{ij}^{-1}(\kappa_{jkl}-\kappa_{ikl})=\kappa_{jik}-\kappa_{jil}.\label{eq:kappa5}
\end{align}
We compute:
\begin{align*}
0&=(E_{\varepsilon_k-\varepsilon_j}E_{\varepsilon_j-\varepsilon_i}E_{\varepsilon_i-\varepsilon_k}-\kappa_{jik}E_{\varepsilon_k-\varepsilon_j}E_{\varepsilon_j-\varepsilon_k})\cdot v_\lambda\\
	&=(\kappa_{ijk}z_{ik}-(q-q^{-1})z_{ij}z_{jk}-\kappa_{jik}z_{jk})v_\lambda\\
	&=((\kappa_{ijk}+1)^{-1}\phi_{ijk}-(q-q^{-1}))z_{ij}z_{jk}v_\lambda-\kappa_{jik}z_{jk}v_\lambda.
\end{align*}
Hence, using \eqref{eq:z1}--\eqref{eq:z3} we see:
\begin{equation}
\kappa_{jik}=\frac{q^{-1}\lambda_{ij}^{-1}-\lambda_{ij}\phi_{ijk}^{-1}}{q-q^{-1}}
\end{equation}
and similarly,
\begin{equation}
\kappa_{jil}=\frac{q^{-1}\lambda_{ij}^{-1}-\lambda_{ij}\phi_{ijl}^{-1}}{q-q^{-1}}.
\end{equation}
Using the above in \eqref{eq:kappa5} gives:
\begin{equation}
\lambda_{ij}^{-1}\phi_{ijk}\phi_{ijl}(\phi_{jkl}-\phi_{ikl})=\lambda_{ij}(\phi_{ijk}-\phi_{ijl})\label{eq:kappa6}.
\end{equation}

Equations  \eqref{eq:phi1}, \eqref{eq:phi2}, \eqref{eq:kappa3}, and \eqref{eq:kappa6} have a unique simultaneous solution for $\phi_{ijl}, \phi_{ikl}, \phi_{jkl}$ in terms of $\phi_{ijk}$, namely:
\begin{gather}
\phi_{ijl}=\phi_{ijk}, \phi_{ikl}=\phi_{jkl}=\lambda_{jk}^2\phi_{ijk}\label{eq:main}.
\end{gather}
From these relations and equations \eqref{eq:z1}--\eqref{eq:z3} we see that all the $z_{ij}$ are determined by $\phi_{123}$. Using that $\mathbb{F}$ is closed under quadratic extensions, we choose $\mu_2=\pm(q\phi_{123})^{-1/2}$ and obtain the desired result: $z_{ij}=[\mu_i;1][\mu_j;0]$. 
\end{proof}
We remark that analogues of equations \eqref{eq:kappa3} and \eqref{eq:kappa6} were found in \cite{BriLem1987} and in the $q=1$ limit they imply the first two identities in \eqref{eq:main}, but in our case all four equations are needed.

\subsection{Modules with torsion}
We now set out to prove an extension of the Theorem \ref{th:torsionfree} to the torsion case. Before we do so, let us introduce some notation. Let $V$ be an irreducible $U_q(\mathfrak{sl}_{n+1})$-module. Let $N=\{\beta\in \Phi| \forall v\in V,\exists k>0 \text{ such that } E_\beta^k \cdot v=0\}, T=\{\beta\in \Phi|\forall v\in V, E_{\beta} \cdot v\neq 0\},N_s=N\cap (-N),T_s=T\cap (-T),N_a=N\backslash N_s$ and $T_a=T\backslash T_s$. Finally, define $V^+=\{v\in V| \forall \beta\in N_a\cup N_s^+ ,E_\beta \cdot v =0\}$ where $N_s^+=\Phi^+\cap N_s$. Using that $q$ is not a root of unity it is easy to show that $\Phi=N\cup T$. The following is an analogue of (4.6) and (4.12) in \cite{F}, but in our proof for completely-pointed modules we avoid working with the center of $U_q(\mathfrak{g})$.

\begin{proposition}\label{prop:closed}
Let $V$ be an irreducible completely-pointed $U_q(\mathfrak{sl}_{n+1})$ weight module. Then $N$ and $T$ are closed subsets of $\Phi$.
\end{proposition}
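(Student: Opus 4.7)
The plan is to prove both closedness statements by contradiction, exploiting scalar identities induced by the completely pointed structure. For each root $\gamma$ and weight vector $v_\mu$, define $e_\gamma(\mu)$ by $E_\gamma\cdot v_\mu = e_\gamma(\mu)v_{\mu+\gamma}$; then $\gamma\in N$ iff every $\gamma$-ray in $\mathrm{Supp}(V)$ meets a zero of $e_\gamma$, while $\gamma\in T$ iff $e_\gamma$ is nonvanishing on $\mathrm{Supp}(V)$. In type $A$, every pair $\alpha,\beta\in\Phi$ with $\alpha+\beta\in\Phi$ has, up to sign and symmetry, the form $\alpha=\varepsilon_i-\varepsilon_j$, $\beta=\varepsilon_j-\varepsilon_k$, so I focus on this case; sign variants follow from \eqref{eq:qg3}--\eqref{eq:qg4} and \eqref{eq:qg7}. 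The key identities from \eqref{eq:qg1} and \eqref{eq:qg6} translate on weight vectors into the structural relation
\begin{equation*}
e_{\alpha+\beta}(\mu) = q^{-1}e_\alpha(\mu)e_\beta(\mu+\alpha) - e_\beta(\mu)e_\alpha(\mu+\beta)
\end{equation*}
together with the $q$-commutation $e_\alpha(\mu)e_{\alpha+\beta}(\mu+\alpha) = q^{-1}e_{\alpha+\beta}(\mu)e_\alpha(\mu+\alpha+\beta)$ and its analogue for $e_\beta$. The latter will serve as \emph{propagation identities}: whenever $e_{\alpha+\beta}$ is nonzero at the relevant weights, a zero of $e_\alpha$ at $\mu$ is equivalent to a zero at $\mu+(\alpha+\beta)$.

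To show $N$ is closed, I assume $\alpha,\beta\in N$ but $\alpha+\beta\in T$ for contradiction. Torsion-freeness gives $e_{\alpha+\beta}\ne 0$ throughout $\mathrm{Supp}(V)$, so the zero sets $Z_\alpha:=\{\nu\in\mathrm{Supp}(V):e_\alpha(\nu)=0\}$ and $Z_\beta$ are invariant under $\pm(\alpha+\beta)$-translation. Evaluating the structural identity at $\nu\in Z_\alpha$ gives $e_{\alpha+\beta}(\nu)=-e_\beta(\nu)e_\alpha(\nu+\beta)$, and non-vanishing forces $e_\beta(\nu)\ne 0$ and $e_\alpha(\nu+\beta)\ne 0$; in particular $Z_\alpha\cap Z_\beta=\emptyset$ and $Z_\alpha\cap(Z_\alpha+\beta)=\emptyset$. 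Since $\alpha\in N$ forces $Z_\alpha$ to meet every $\alpha$-ray in $\mathrm{Supp}(V)$ and $\beta\in N$ similarly forces $Z_\beta$ to meet every $\beta$-ray, combining these translation constraints on the two-dimensional sublattice spanned by $\alpha,\beta$ will pin down a weight at which both $e_\alpha$ and $e_\beta$ vanish simultaneously; there the structural identity gives $e_{\alpha+\beta}=0$, contradicting $\alpha+\beta\in T$.

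For $T$ closed the argument is dual: assuming $\alpha,\beta\in T$ and $\alpha+\beta\in N$, take a weight $\nu$ with $e_{\alpha+\beta}(\nu)=0$. Since $e_\alpha,e_\beta$ are everywhere nonzero, the propagation identities read in reverse force $e_{\alpha+\beta}$ to vanish on all translates $\nu + \mathbb{Z}\alpha + \mathbb{Z}\beta$ that lie in $\mathrm{Supp}(V)$; the structural identity restricted to this large zero locus, combined with the mixed relations \eqref{eq:qg3}--\eqref{eq:qg4} between $E_{\alpha+\beta}$ and the negative root vectors, will produce an incompatibility with $\alpha,\beta\in T$. The main obstacle is the delicate combinatorial analysis of the interlocking zero sets in the two-dimensional weight sublattice spanned by $\alpha,\beta$, keeping careful track of how propagation in the $\alpha$, $\beta$, and $\alpha+\beta$ directions interact, with irreducibility of $V$ supplying enough weights in $\mathrm{Supp}(V)$ to locate the concrete weight witnessing the contradiction.
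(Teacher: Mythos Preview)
Your sketch identifies useful scalar identities but leaves the decisive steps as assertions, and in fact the constraints you explicitly derive do \emph{not} yield a contradiction. For the $N$-closed case, work in the rank-two sublattice $\mu_0+\mathbb{Z}\alpha+\mathbb{Z}\beta$ and encode $Z_\alpha,Z_\beta$ by their sets of diagonals $S_\alpha,S_\beta\subset\mathbb{Z}$ (using your $(\alpha+\beta)$-invariance). Your constraints become: $S_\alpha$ has no two consecutive integers, $S_\beta$ has no two consecutive integers, $S_\alpha\cap S_\beta=\varnothing$, $S_\alpha$ unbounded above, $S_\beta$ unbounded below. The choice $S_\alpha=2\mathbb{Z}_{\ge 0}$, $S_\beta=-1-2\mathbb{Z}_{\ge 0}$ satisfies every one of these, so no common zero of $e_\alpha,e_\beta$ is forced and your intended contradiction never materializes. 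For the $T$-closed case the situation is similar: after you show $Z_{\alpha+\beta}$ is $(\mathbb{Z}\alpha+\mathbb{Z}\beta)$-invariant, the structural identity on that locus reduces to $q^{-1}e_\alpha(\nu)e_\beta(\nu+\alpha)=e_\beta(\nu)e_\alpha(\nu+\beta)$, a functional equation with many nonvanishing solutions (e.g.\ $e_\alpha(\mu_0+m\alpha+n\beta)=q^{cn}$, $e_\beta(\mu_0+m\alpha+n\beta)=q^{(c+1)m}$), so again no contradiction follows from the relations you cite.

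The paper bypasses this combinatorics entirely. For $N$ it argues directly: choose $v^+$ with $E_\alpha v^+=0$ and $s$ minimal with $E_\beta^s v^+=0$, expand $0=E_\alpha E_\beta^s v^+$ via the relation $KE_{\alpha+\beta}=\pm(q^jE_\alpha E_\beta-q^kE_\beta E_\alpha)$, and use the one-dimensional weight spaces to collapse the resulting sum to a nonzero $q$-geometric coefficient times $E_{\alpha+\beta}(E_\beta^{s-1}v^+)$; hence $E_{\alpha+\beta}$ kills a nonzero vector. For $T$ the essential extra ingredient---absent from your outline---is the relation $[E_{\alpha+\beta},E_{-\alpha-\beta}]=[K_{\alpha+\beta};0]$: assuming $\alpha+\beta\in N$, one finds an infinite sequence of $E_{\alpha+\beta}$-killed vectors along the $(\alpha+\beta)$-ray (which lies in $\mathrm{Supp}(V)$ since $\alpha,\beta\in T$), and the $\mathfrak{sl}_2$-type computation forces the $K_{\alpha+\beta}$-eigenvalues at these vectors to be $\pm q^{-m}$ with $m\ge 0$, incompatible with their increasing by $q^2$ along the ray. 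You would need to inject this eigenvalue information (or something equivalent) to close the gap.
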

\begin{proof}
Let $\alpha,\beta\in N$ be such that $\alpha+\beta\in \Phi$. Then there exist $0\neq v^+\in V$ such that $E_\alpha\cdot v^+=0$ and $s\in \mathbb{Z}_{>0}$ such that $E_\beta^s\cdot v^+=0$ and $E_\beta^{s-1}\cdot v^+\neq 0$. Note that equations \eqref{eq:qg1}--\eqref{eq:qg4} imply $KE_{\alpha+\beta}=\pm (q^jE_\alpha E_\beta-q^kE_\beta E_\alpha)$ for some $j,k\in \{-1,0,1\},$ and invertible $K\in \mathbb{F}[K_1^{\pm 1}, K_2^{\pm 1},\dots, K_n^{\pm 1}]$. Using this, we compute:
\begin{align}
0&=E_\alpha E_\beta^s\cdot v^+=\pm\sum_{i=0}^{s-1} q^{k_i}E_\beta^i K E_{\alpha+\beta} E_\beta^{s-i-1}\cdot v^+\nonumber \\
	&=\pm\left (\sum_{i=0}^{s-1} q^{k_i+r_i}\right ) K E_{\alpha+\beta}\cdot ( E_\beta^{s-1}\cdot v^+)\label{eq:local_nil}
\end{align}
where $k_i,r_i\in \mathbb{Z},i\in \{1,2,\dots, n\}$ are increasing or decreasing sequences. Therefore $\alpha+\beta \in N$, which gives that $N$ is closed.

Now let $\alpha,\beta\in T$ be such that $\alpha+\beta\in \Phi$. Since $\alpha,\beta\in T$ we have $E_\alpha^k\cdot v_\lambda\neq 0$ and $E_\beta^r \cdot v_\lambda\neq 0$ for all $k,r\in \mathbb{Z}_{\geq 0}$ and all weights $\lambda\in(\mathbb{F}^\times)^{n+1}$, hence $q^{\mathbb{Z}_{\geq 0}(\alpha+\beta)}\text{supp}(V) \subseteq \text{supp}(V)$. If $E_{\alpha+\beta}\in N$ then, given $\lambda\in \text{wt}(V)$, we have an infinite sequence of vectors $v_i\in V_{q^{m_i(\alpha+\beta)}\lambda}$ where $m_i\in \mathbb{Z}_{\geq 0}$ is an increasing sequence such that $E_{\alpha+\beta}\cdot v_i=0$. We compute:
\begin{align*}
0&=E_{\alpha+\beta}\cdot v_i\\
	&=cE_{\alpha+\beta}(E_{-\alpha-\beta})^{m_{i+1}-m_i}\cdot v_{i+1}\\
	&\qquad \text{ (for some $c\in \mathbb{F}\backslash \{0\}$ since $V$ is completely pointed)}\\
	&=c\left(\sum_{i=0}^{m_{i+1}-m_i-1}(E_{-\alpha-\beta})^{i}[K_{\alpha+\beta};0](E_{-\alpha-\beta})^{m_{i+1}-m_i-i}\right )\cdot v_{i+1}\\
	&=c[m_{i+1}-m_i]_q[K_{\alpha+\beta};m_{i+1}-m_i-1](E_{-\alpha-\beta})^{m_{i+1}-m_i-1}\cdot v_{i+1}.
\end{align*}
Since $c(E_{-\alpha-\beta})^{m_{i+1}-m_i-1}\cdot v_{i+1}\neq 0$ and $m_{i+1}>m_i$ we see that $[K_{\alpha+\beta};m_{i+1}-m_i-1]\cdot v_{i+1}=0$. Therefore, letting $\lambda_{\alpha+\beta}\in \mathbb{F}\backslash \{0\}$ be the eigenvalue of $K_{\alpha+\beta}$ on $v_{i+1}$, we see that $\lambda_{\alpha+\beta}=\pm q^{-(m_{i+1}-m_i-1)}$---a non-positive integral power of $q$ times $\pm 1$. However, since $K_{\alpha+\beta}\cdot v_{i+j}=q^{2(m_j-m_{i+1})}\lambda_{\alpha+\beta}v_{i+j}$ and $m_j$ is an increasing sequence of integers, for some $j^*>i$ we must have $K_{\alpha+\beta}$ acting as a non-negative power of $q$ times $\pm 1$ on $v_{j^*}$. This $j^*$ is therefore the highest index in the sequence of $m_j$, contrary to it being an infinite sequence. Therefore $\alpha+\beta\notin N$ and must be in $T$ since $\Phi=N\cup T$.
\end{proof}

As a corollary, we have that $N_s$ and $T_s$ are root subsystems of $\Phi$. 

\begin{lemma}\label{lem:roots}
If $\mathfrak{g}$ is simply-laced, then there exists a base $B$ of $\Phi(\mathfrak{g})$ such that $N_a\subseteq \Phi_B^+$, and every $\alpha\in B\backslash N_a$ is a positive root (with respect to the usual base of $\Phi$).
\end{lemma}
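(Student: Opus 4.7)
The plan is to construct $B$ as the base associated to a carefully chosen positive system of $\Phi$. First, I would verify two structural properties of $N_a$. It is pointed: $N_a\cap(-N_a)=\emptyset$, because $\alpha\in N_a\subseteq N$ forces $-\alpha\in T$, hence $-\alpha\notin N_a$. It is also closed under addition within $\Phi$: if $\alpha,\beta\in N_a$ with $\alpha+\beta\in\Phi$, then $\alpha+\beta\in N$ by Proposition~\ref{prop:closed}; the possibility $\alpha+\beta\in N_s$ is excluded, since it would give $-(\alpha+\beta)\in N$ and then, by closedness of $N$ applied to $\alpha+(-(\alpha+\beta))=-\beta$, we would conclude $-\beta\in N$, contradicting $\beta\in N_a$.

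Next, I would invoke the standard root-system fact that any closed pointed subset of $\Phi$ is contained in some positive system. Concretely, the cone $\mathbb{R}_{\geq 0}N_a$ is pointed, so there is a linear functional $\ell\colon\mathbb{R}\Phi\to\mathbb{R}$ strictly positive on $N_a$; a generic perturbation keeps $\ell$ positive on $N_a$ while being nonzero on all of $\Phi$, producing a positive system $\Phi^+:=\{\alpha\in\Phi:\ell(\alpha)>0\}$ with $N_a\subseteq\Phi^+$.

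Finally, to secure the additional condition that simple roots of $B$ outside $N_a$ are standardly positive, I would argue by minimization. Among all positive systems of $\Phi$ containing $N_a$, choose one minimizing $m(\Phi^+):=|\Phi^+\cap\Phi^-_{\mathrm{std}}|$, the number of its elements that are negative with respect to the usual base of $\Phi$; let $B$ be the associated base. If some $\alpha\in B\setminus N_a$ were standardly negative, then $-\alpha\notin N_a$ (else both $\alpha,-\alpha$ would lie in $\Phi^+$, impossible in a positive system), and applying the simple reflection $s_\alpha$ would replace $\Phi^+$ by $(\Phi^+\setminus\{\alpha\})\cup\{-\alpha\}$, still a positive system, still containing $N_a$, but with $m$-value strictly smaller---contradicting the choice of $\Phi^+$.

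The main obstacle I expect is the pointedness claim in step two, namely ruling out non-trivial positive-real relations $\sum_ic_i\alpha_i=0$ with $\alpha_i\in N_a$ and $c_i>0$. This is where the simply-laced hypothesis should enter crucially: from $0=\|\sum_ic_i\alpha_i\|^2$ some inner product $(\alpha_i,\alpha_j)$ must be strictly negative, forcing $\alpha_i+\alpha_j\in\Phi$ and hence, via the closure of $N_a$ established in step one, $\alpha_i+\alpha_j\in N_a$; iterating a replacement of such a pair by their sum strictly decreases the total coefficient $\sum_ic_i$, and the process must terminate at a relation of length at most two, where the pointedness of $N_a$ directly yields the required contradiction.
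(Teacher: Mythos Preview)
Your approach is sound and genuinely different from the paper's. The paper first cites \cite[Lemma~4.7(i)]{BBL} for the existence of a base $B$ with $N_a\subseteq\Phi_B^+$, then applies an element of $W_{N_s}\times W_{T_s}$ to move the simple roots lying in $N_s\cup T_s$ into the standard positive system; the simply-laced hypothesis enters there, to guarantee $|\langle\beta,\alpha\rangle|\le 1$ so that each reflection $r_\beta$ with $\beta\in N_s\cup T_s$ sends $\alpha\in N_a$ to $\alpha$ or $\alpha\pm\beta$, which remains in $N_a$ by the closure properties. Your route---verify that $N_a$ is closed and pointed, place it inside a positive system via a separating functional, then minimize $|\Phi^+\cap\Phi^-_{\mathrm{std}}|$ over all such positive systems and use the simple-reflection swap $s_\alpha\Phi^+=(\Phi^+\setminus\{\alpha\})\cup\{-\alpha\}$ to force $B\setminus N_a\subseteq\Phi^+_{\mathrm{std}}$---is more self-contained and avoids the external citation. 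In fact your argument never uses the simply-laced hypothesis: the implication $(\alpha_i,\alpha_j)<0\Rightarrow\alpha_i+\alpha_j\in\Phi$ that you flag as the crucial point holds in every reduced root system (the $\alpha_j$-string through $\alpha_i$ has $q\ge 1$ whenever $\langle\alpha_i,\alpha_j^\vee\rangle<0$), so you have mislocated where the hypothesis matters.

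One genuine gap: the termination claim in your cone-pointedness step is not valid for real coefficients, since a strictly decreasing sequence of positive reals need not be finite, and your replacement $c_i\alpha_i+c_j\alpha_j\rightsquigarrow c_i(\alpha_i+\alpha_j)+(c_j-c_i)\alpha_j$ need not reduce the number of terms. The fix is immediate: the solution set $\{c\in\mathbb{R}_{\ge 0}^m:\sum_i c_i\gamma_i=0\}$ is a rational polyhedral cone, so if nontrivial it contains an integer point, after which your descent on $\sum_i c_i\in\mathbb{Z}_{\ge 0}$ does terminate. Alternatively, bypass the issue entirely by citing the standard fact (Bourbaki, \emph{Groupes et alg\`ebres de Lie}, Ch.~VI, \S1.7) that any closed subset $P\subseteq\Phi$ with $P\cap(-P)=\emptyset$ lies in some positive system.
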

\begin{proof}
Lemma 4.7 (\emph{i}) of \cite{BBL} proves the existence of a base $B$ of $\Phi=\Phi(\mathfrak{g})$ such that $N_a^+\subseteq \Phi_B^+$. We may apply their result since the proof only uses results on root subsystems that satisfy the same hypotheses as in our case. We show how to choose a new base $B_n$ satisfying the same condition, but with every $\alpha \in B \backslash N_a$ positive with respect to the usual base, $B_u$. By the previous proposition we see that $N_s$ ant $T_s$ are a root subsystems of $\Phi$. Let $W_N$ and $W_T$ be the Weyl group of the root subsystems $N_s$ and $T_s$ respectively. We may choose a base $B_p$ of $N_s \cup T_s$ such that $(N_s\cup T_s)\cap B_p$ is contained in $\Phi_{B_u}^+$. It is a well known fact of finite root systems (see \cite[Section 10.1]{H}) that the Weyl group permutes bases. Let $w\in W_T \times W_N$ be the Weyl group element taking $B\cap (N_s \cup T_s)$ to $B_p$. We want to show that $w$ preserves $N_a$. Let $\alpha \in N_a$ and $\beta \in B\cap (N_s \cup T_s)$ be given. We have $r_\beta(\alpha)=\alpha - \langle \beta, \alpha \rangle \beta\in\Phi$. Since $\mathfrak{g}$ is simply laced, $\langle \beta, \alpha \rangle=0$ or $-1$. In the first case, $r_\beta(\alpha)=\alpha\in N_a$. Otherwise, $r_\beta(\alpha)=\alpha+\beta$. By (\emph{ii}) of the above lemma, we have $\alpha+\beta\in N_a$. Hence, $w(\alpha)\in N_a$, and $w(B)$ is the desired basis.
\end{proof}

We let $U_q(\mathfrak{g}_{N_s})$ (resp. $U_q(\mathfrak{n}_{N_s})$) denote the subalgebra of $U_q(\mathfrak{g})$ generated by $\{E_\alpha|\alpha\in N_s\}$ and  (resp. $\{E_\alpha|\alpha\in N_s^+\}$).
\begin{proposition}
Suppose $V$ is an irreducible completely-pointed $U_q(\mathfrak{g})$-module. Then the following hold:
\begin{enumerate}
\item if $N\neq \varnothing$ then the set $V^+$ is non-zero, i.e. there exists $v^+\in V\backslash \{0\}$ such that $E_\alpha\cdot v^+=0$ for $\alpha\in N_s^+\cup N_a$,
\item either $B\cap T$ is empty or it corresponds to a connected part of the Dynkin diagram of $\Phi_B$ where $B$ is the basis of the previous lemma.
\end{enumerate}
\end{proposition}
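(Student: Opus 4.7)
For part (1), the plan is to realize $v^+$ as a ``highest weight vector'' for the subalgebra of $U_q(\mathfrak{g})$ generated by $\{E_\alpha : \alpha \in N_s^+ \cup N_a\}$. I would first invoke Lemma \ref{lem:roots} to choose a base $B$ of $\Phi$ with $N_a \subseteq \Phi_B^+$, and then refine the choice so that $N_s^+ \subseteq \Phi_B^+$ as well; this is achievable because $W_{N_s}$ permutes the positive systems of the root subsystem $N_s$ without affecting the inclusion $N_a \subseteq \Phi_B^+$. Fix a linear functional $\xi$ strictly positive on $B$, so that $\xi(\alpha) > 0$ for every $\alpha \in N_s^+ \cup N_a$. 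Starting from any nonzero weight vector $v_0 \in V$, iteratively apply root vectors $E_\alpha$ with $\alpha \in N_s^+ \cup N_a$, passing at each step to a nonzero image whenever one exists. Closedness of $N$ (Proposition \ref{prop:closed}) guarantees that commutators produced along the way remain root vectors in $N$, and local nilpotence of each $E_\alpha$, $\alpha \in N$, on $V$ guarantees termination after finitely many steps, yielding $v^+ \in V^+ \setminus \{0\}$.

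For part (2), the plan is by contradiction. Since $\mathfrak{g}$ is of type $A_n$, the Dynkin diagram of $\Phi_B$ is a path; if $B \cap T$ were nonempty but disconnected, one could pick a shortest disconnecting triple $\alpha, \gamma, \beta$ with $\alpha, \beta \in B \cap T$, $\gamma \in B \cap N$, and $\gamma$ adjacent to both $\alpha$ and $\beta$ in the diagram. The strategy is then to build $x_1, x_2 \in U_q(\mathfrak{g})_{\alpha+2\gamma+\beta}$ and $y_1, y_2 \in U_q(\mathfrak{g})_{-(\alpha+2\gamma+\beta)}$ from appropriate products of the relevant root vectors, apply Lemma \ref{Lem:det}, and compute the matrix $(\gamma_{ij})$ using the commutation relations from the preliminaries. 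Local nilpotence of $E_\gamma$ combined with injectivity of $E_\alpha$ and $E_\beta$ should force an incompatibility in the singular-determinant condition; if not directly, a short induction on the length of the disconnecting subpath in the Dynkin diagram reduces the general case to this base case.

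The principal obstacle in (1) is reconciling the two refinement steps for the base $B$: one must verify that the Weyl-group adjustment used to place $N_s^+$ in $\Phi_B^+$ does not move $N_a$ outside $\Phi_B^+$, which requires tracking how $W_{N_s}$ acts on $N_a$ (in particular, using closedness of $N$ so that $W_{N_s}$-translates of $N_a$-elements land back in $N_a$). The obstacle in (2) is selecting the $x_i$'s and $y_i$'s so that the singular $2 \times 2$ matrix from Lemma \ref{Lem:det}, together with Proposition \ref{prop:closed} and the root-vector identities, yields a clean contradiction; the commutation identities \eqref{eq:qg3}--\eqref{eq:qg7} introduce several correction terms that must be carefully tracked.
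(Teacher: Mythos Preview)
Your plan for part (1) has a genuine gap in the termination argument. You assert that ``local nilpotence of each $E_\alpha$, $\alpha \in N$, on $V$ guarantees termination after finitely many steps,'' but local nilpotence of the \emph{individual} operators does not by itself bound the length of a sequence $E_{\alpha_k}\cdots E_{\alpha_1}v_0$ built from varying $\alpha_i\in N_s^+\cup N_a$: the nilpotency index of $E_\alpha$ can depend on the vector it is applied to, and that vector changes at each step. What makes termination work is precisely the completely pointed hypothesis, which you never invoke in part (1). In a completely pointed module, whenever a product $E_{\gamma_k}\cdots E_{\gamma_1}v_0$ is nonzero, any adjacent transposition of the factors yields a nonzero scalar multiple of the same weight vector (both lie in a one-dimensional weight space and neither is zero). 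Hence the product can be reordered so that all copies of a fixed $E_\alpha$ sit next to $v_0$; local nilpotence on the \emph{fixed} vector $v_0$ then bounds the multiplicity of each $\alpha$, and finiteness of the root system gives termination. Without this reordering step your $\xi$-monotonicity observation only shows that the weights visited are distinct, not that there are finitely many.

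The paper organizes part (1) differently and in a way that makes the role of the completely pointed hypothesis explicit: it first applies only the \emph{simple} root vectors $E_{\beta_j}$ for $\beta_j\in N\cap B$, one at a time with the maximal power, producing $v^+=\prod_j E_{\beta_j}^{r_j-1}v^\ast$; the reordering-via-completely-pointed argument then shows all of these simple $E_{\beta_j}$ simultaneously annihilate $v^+$; finally, for an arbitrary $\gamma\in N_s^+\cup N_a$ it writes $\gamma\in\Phi_B^+$, picks a chain $\gamma_0=\beta_i,\gamma_1,\ldots,\gamma_r=\gamma$ with $\gamma_{t+1}-\gamma_t\in B$ starting from some $\beta_i\in N\cap B$ appearing in the support of $\gamma$, and inductively shows $E_{\gamma_t}v^+=0$ along the chain using the $q$-commutator identities. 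This route avoids having to produce the refined base you describe (with both $N_a\subseteq\Phi_B^+$ and $N_s^+\subseteq\Phi_B^+$), since only the simple roots in $N\cap B$ are used to build $v^+$; the obstacle you flag about reconciling the two refinements does not arise.

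For part (2) the paper gives no argument and simply refers to \cite[Lemma~4.9]{BBL}, so your sketch via Lemma~\ref{Lem:det} is as much as one can compare against; your reduction to a minimal disconnecting configuration is the natural line, though the weight $\alpha+2\gamma+\beta$ you propose should be checked carefully in type $A$ (the relevant determinant computations in \cite{BBL} use slightly different monomials).
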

\begin{proof}
Proof of (1). Let $v^*\in V\backslash\{0\}$ and $\{\beta_1,\beta_2,\dots, \beta_l\}=N\cap B$ where $B$ is the base of $\Phi$ given in the previous lemma. Then there exist $r_j\in \mathbb{Z}_{>0}, j\in \{1,2,\dots, l\}$ such that $E_{\beta_j}^{r_j}\prod_{k=j+1}^{l}E_{\beta_{k}}^{r_k-1}\cdot v^*=0$ and $r_j$ is minimal that this occurs. Let $v^+=\prod_{k=1}^{l}E_{\beta_{k}}^{r_k-1}\cdot v^*.$ Then $v^+\neq 0$ and $E_{\beta_j}\cdot v^+=0,j\in\{1,2,\dots, l\}$. Since $V$ is completely pointed, we have $E_\alpha E_\beta\cdot v_\lambda=cE_\beta E_\alpha\cdot v_\lambda$ for some $c\in \mathbb{F}$ whenever $v_\lambda$ is a weight vector such that $E_\beta\cdot v_\lambda\neq 0$, hence $E_{\beta_k}\cdot v^+=0$ for all $k\in\{1,2,\dots, l\}$. Now, let $\gamma\in N_a\cup N_s^+$. Then $\gamma\in \Phi_B^+$ hence it can be written as a positive integral combination of elements of $B$. At least one of these simple roots must be in $N\cap B$ since otherwise $\gamma \in T$ which is a contradiction. Let $\alpha,\beta \in B$ such that $E_\beta \cdot v^+=0$ and $\alpha+\beta\in \Phi$. We compute 
\begin{align*}
E_{\alpha+\beta} \cdot v^+&=\pm K(q^jE_{\beta}E_{\alpha}-q^{k}E_{\alpha}E_\beta)\cdot v^+\\
	&=\pm K(q^j-cq^{k})E_{\beta}E_\alpha\cdot v^+\\
	&=0
\end{align*}
for some $j,k\in \{-1,0,1\}, K\in \mathbb{F}[K_1^{\pm 1}, K_2^{\pm 1},\dots, K_n^{\pm 1}]$ and $c\in \mathbb{F}$. Let $\beta_i\in N\cap B$ be one of the simple roots in the decomposition of $\gamma$. Then there exists a sequence of roots $\gamma_0=\beta_i,\gamma_1,\dots, \gamma_r=\gamma$ with $\gamma_{i+1}-\gamma_i\in B$. Since $E_{\beta_i}\cdot v^+=0$ we may use induction on the sequence of $\gamma_k$ to see that $E_\gamma\cdot v^+=0$. Therefore $v^+$ is the vector we are looking for.

The proof of (2) is similar to Lemma 4.9 of \cite{BBL} and we leave it to the reader.
\end{proof}

We are now ready to prove:
\begin{mainthmi}
Let $V$ be an irreducible, infinite-dimensional, completely-pointed $U_q(\mathfrak{sl}_{n+1})$-module and let $v^+\in V^+$ be given. Then the action of $U_q(\mathfrak{sl}_{n+1})$  on $V$ can be extended to a $U_q(\mathfrak{gl}_{n+1})$ action such that the following relations hold:
\begin{equation*}
E_{-\varepsilon_i+\varepsilon_j}E_{\varepsilon_i-\varepsilon_j}\cdot v^+=[\bar K_{i};1][\bar K_j;0]\cdot v^+.
\end{equation*}
Also, we have
\begin{equation*}
F_{i}E_{i}\cdot v_\lambda=[\bar K_{i};1][\bar K_{i+1};0]\cdot v_\lambda
\end{equation*}
for any weight vector $v_{\lambda}\in V$.
\end{mainthmi}
\begin{proof}There are three cases: the extreme cases $T_s=\varnothing,T_s=\Phi$ and the intermediate case $\varnothing \subsetneq T_s \subsetneq \Phi.$

\emph{Case i}: $T_s=\varnothing$. In this case, $V$ is a highest weight module, hence, by Proposition \ref{prop:highest}, is isomorphic to $L(\lambda)$ where $\lambda=(c,\pm 1, \pm 1, \cdots, \pm 1),(\pm 1,\pm 1, \cdots, \pm 1,c),$ with $c$ not a positive integer power of $q$, or $(\pm 1, \pm 1, \cdots, c, c^{-1}q^{-1}, \cdots , \pm 1)$. As before, let $z_{ij}\in \mathbb{F}, 1\leq i<j\leq n+1$ be the scalars by which $E_{-\varepsilon_i+\varepsilon_j}E_{\varepsilon_i-\varepsilon_j}$ act on $v_\lambda$. We have $z_{ij}=0$ for $i<j\in \{1,2,\dots, n+1\}$. In the first case, we may choose $\mu_1=c$, and $\mu_i=\pm 1, 1 < i \leq n+1$. In the second case $\mu_{n+1}=q^{-1}c^{-1}$ and $\mu_i=\pm q^{-1}, 1\leq i < n+1$ gives the desired result. In the third case, if $\lambda_k=c$ then we choose $\mu_i=\pm q^{-1}, 1\leq i < k, \mu_k=q^{-1}c^{-1},$ and $\mu_i=\pm 1,k<i\leq n+1$ satisfies the conditions of the theorem.

\emph{Case ii:} $T_s=\Phi$. In this case, $V$ is a torsion free module. By Theorem \ref{th:torsionfree} the conclusion holds for some weight vector $v_\lambda$. As in the last part of the next case one shows that then it holds for any weight vector.

\emph{Case iii:} $\varnothing \subsetneq T_s \subsetneq \Phi.$ Let $\beta_1,\beta_2,\dots, \beta_n$ be a base of $\Phi$ such that $N_a\subseteq \Phi_B^+$ with $\beta_i \in T_s$ for $i\in \{k,k+1,\dots,l\}$. Let $v^+$ be an invariant vector of weight $\lambda$ for $E_\alpha,\alpha\in N_a\cup  N_s^+$. Using Theorem \ref{th:torsionfree} we can choose $\mu_{s_i}, i\in \{k,k+1,\dots, l+1\}$ such that $z_{s_i,s_j}=[\mu_{s_i};1][\mu_{s_j};0]$ and $\lambda_{s_i,s_j}=\mu_{s_i}\mu_{s_j}^{-1}$ for $i,j\in \{k,k+1,\dots, l+1\}$. For $\beta\in N_s^+\cup N_a$ we have $E_{-\beta}E_{\beta}\cdot v_\lambda=0$. We need to show that this choice of $\mu_{s_i}$ induces a choice $\mu_i', i\in \{1,2,\dots,n+1\}$ satisfying $z_{ij}=[\mu_i';1][\mu_j';0]$ and $\lambda_{ij}=\mu_i'\mu_j'^{-1}$ for $i\neq j$.

Let $\beta=\varepsilon_{t}-\varepsilon_{u}\in N_a,\beta'=\varepsilon_{u}-\varepsilon_{v}\in T_s^+$. We have $z_{ut}v^+=E_{\beta}E_{-\beta}v^+=[\lambda_{tu};0]v^+$. Since $-\beta,-\beta'\in T$ we can define $\kappa_{vut}$ similarly to Theorem \ref{th:torsionfree} by $E_{-\beta}E_{-\beta'}\cdot v^+=\kappa_{vut}E_{-\beta-\beta'}\cdot v^+$. 
By similar computations to those used to prove \eqref{eq:z1}--\eqref{eq:z3} we obtain the following for $1\leq i < j < k \leq n+1$:
\begin{align}
(q^{-1}-(q-q^{-1})\kappa_{kji})z_{kj}&=(\kappa_{kji}+1)(\lambda_{jk}^{-1}\kappa_{kji}+q^{-1}[\lambda_{jk};0])\label{eq:kappa9}\\
(q^{-1}-(q-q^{-1})\kappa_{kji})z_{ji}&=\kappa_{kji}(\lambda_{ij}\kappa_{kji}-[\lambda_{ij};-1])\label{eq:kappa10}\\
(q^{2}\lambda_{ij}+(q-q^{-1})\kappa_{kij})z_{ki}&=(\kappa_{kij}+q\lambda_{ij})(q^{-1}\lambda_{jk}\kappa_{kij}+q[\lambda_{ik};0])\\
(q^{2}\lambda_{ij}+(q-q^{-1})\kappa_{kij})z_{ij}&=\kappa_{kij}(q^{-1}\kappa_{kij}+[\lambda_{ij};1])\\
(q^{-2}\lambda_{jk}-(q-q^{-1})\kappa_{ikj})z_{ik}&=(\kappa_{ikj}+q^{-1}\lambda_{jk})(q\lambda_{ij}\kappa_{ikj}-q^{-1}[\lambda_{ik};0])\\
(q^{-2}\lambda_{jk}-(q-q^{-1})\kappa_{ikj})z_{kj}&=\kappa_{ikj}(q\kappa_{ikj}-[\lambda_{jk};-1])\\
(q\lambda_{jk}^{-1}+(q-q^{-1})\kappa_{jki})z_{jk}&=(\kappa_{jki}+\lambda_{jk}^{-1})(\kappa_{jki}-q[\lambda_{jk};0])\\
(q\lambda_{jk}^{-1}+(q-q^{-1})\kappa_{jki})z_{ki}&=\kappa_{jki}(\lambda_{ij}^{-1}\kappa_{jki}-[\lambda_{ik};-1])\\
(q^{-1}\lambda_{ij}^{-1}-(q-q^{-1})\kappa_{jik})z_{ji}&=(\kappa_{jik}+\lambda_{ij}^{-1})(\kappa_{jik}+q^{-1}[\lambda_{ij};0])\\
(q^{-1}\lambda_{ij}^{-1}-(q-q^{-1})\kappa_{jik})z_{ik}&=\kappa_{jik}(\lambda_{jk}^{-1}\kappa_{jik}+[\lambda_{ik};1])
\end{align}
The proof now depends on the order of $t,u,v$. We consider the case $t<u<v$, with similar arguments giving the other cases. Putting $i=t,j=u,k=v$, equations \eqref{eq:kappa9} and \eqref{eq:kappa10} hold. Since $z_{ji}=[\lambda_{ij};0]$ this gives:
\begin{equation*}
(q^{-1}-(q-q^{-1})\kappa_{kji})[\lambda_{ij};0]=\kappa_{kji}(\lambda_{ij}\kappa_{kji}-[\lambda_{ij};-1])
\end{equation*}
which may be solved to give $\kappa_{kji}=-1$ or $q^{-1}\lambda_{ij}^{-1}[\lambda_{ij};0]$. But, if $\kappa_{kji}=-1$ then \eqref{eq:kappa9} gives $z_{kj}=0$ contradicting $\varepsilon_j-\varepsilon_k\in T_s$. Hence $\kappa_{kji}=q^{-1}\lambda_{ij}^{-1}[\lambda_{ij};0]$. Inserting this in \eqref{eq:kappa9} gives:
\begin{align*}
q^{-1}\lambda_{ij}^{-2}z_{kj}&=\frac{-q^{-1}\lambda_{ij}^{-2}+q}{q-q^{-1}}\left ( \frac{-q^{-1}\lambda_{jk}^{-1}\lambda_{ij}^{-2}+q^{-1}\lambda_{jk}}{q-q^{-1}}\right )\\
z_{kj}&=\frac{q\lambda_{ij}-q^{-1}\lambda_{ij}^{-1}}{q-q^{-1}}\left ( \frac{\lambda_{ij}\lambda_{jk}-(\lambda_{ij}\lambda_{jk})^{-1}}{q-q^{-1}}\right ).
\end{align*}
In light of Theorem \ref{th:torsionfree}, for the root subsystem $T_s$ we have $z_{kj}=[\mu_k;1][\mu_j;0],\lambda_{jk}=\mu_j\mu_k^{-1}$ for some $\mu_k,\mu_j\neq 0$ in $\mathbb{F}$ (using that $\mathbb{F}$ is closed under quadratic extensions). Therefore $\lambda_{ij}=\pm \mu_k$ or $\pm q^{-1}\mu_j^{-1}$. Similarly, for any order of $t,u,v$ we have $\lambda_{tu}=\pm \mu_{v}$ or $\pm q^{-1}\mu_{u}^{-1}$. Defining $\beta=\varepsilon_{t}-\varepsilon_{u}\in T_s, \beta'=\varepsilon_{u}-\varepsilon_{v}\in N_a$ we similarly see $\lambda_{uv}=\pm \mu_{u}$ or $\pm q^{-1}\mu_{t}^{-1}$ for some $\mu_{t},\mu_{u}\neq 0$ such that $z_{tu}=[\mu_{t};1][\mu_{u};0],\lambda_{tu}=\mu_{t}\mu_{u}^{-1}$.

For $\varepsilon_t-\varepsilon_u$ and $\varepsilon_u-\varepsilon_v\in N_s^+\cup N_a$, by computations similar to \eqref{eq:lambda1}, $\lambda_{tu}\lambda_{uv}=\pm q^{-1},\pm\lambda_{uv}$ or $\pm \lambda_{tu}$. Since $\varepsilon_{s_{k}}-\varepsilon_{s_{k+1}}\in T_s^+$ and $\varepsilon_{s_{k-2}}-\varepsilon_{s_k}\in N_a$, we have $\lambda_{s_{k-2},s_{k-1}}\lambda_{s_{k-1},s_k}=\pm q^{-1}\mu_{s_{k}}^{-1}$ or $\pm\mu_{s_{k+1}}$. If $\lambda_{s_{k-2},s_{k-1}}\lambda_{s_{k-1},s_k}=\pm q^{-1}$, then $\mu_{s_k}=\pm 1$ or $\mu_{s_{k+1}}\pm q^{-1}$, each of which imply $z_{s_{k+1},s_{k}}=0$ contradicting $\varepsilon_{s_k}-\varepsilon_{s_{k+1}}\in T_s$. Therefore $\lambda_{s_{k-2},s_{k-1}}=\pm 1$. In the following, we let $I_1=\{1,2,\dots, k-1\},I_2=\{k,k+1,\dots, l+1\}, I_3=\{l+2,l+3,\dots,n+1\}$. By \eqref{eq:lambda2}, we have $\lambda_{s_i,s_j}=\pm 1$ for $i<j\in I_1$. Similarly, $\lambda_{s_i,s_j}=\pm 1$ for $i<j\in I_3$.

Now, let $i\in I_2,j\in I_3$. By construction $\varepsilon_{s_i}-\varepsilon_{s_{l+1}}\in T_s^+$ and $\varepsilon_{s_{l+1}}-\varepsilon_{s_{j}}\in N_a$, and we have $\lambda_{s_{l+1},s_{j}}=\pm \mu_{s_{l+1}}$ or $\pm q^{-1}\mu_{s_i}^{-1}$. If the first case holds for some $i\in I_2$ then we keep $\mu'_{s_i}=\mu_{s_i},i\in I_2$. Otherwise, it is the case that $\lambda_{s_{l+1},s_{j}}=\pm q^{-1}\mu_{s_r}^{-1}$ for all $r\in I$, which implies that $\mu_{s_r}=c$ for $r\in I$. Now, having $\varepsilon_{s_i}-\varepsilon_{s_{l}}\in T_s^+$ and $\varepsilon_{s_{l}}-\varepsilon_{s_{j}}\in N_a$ gives $\lambda_{s_l,s_{l+1}}\lambda_{s_{l+1},s_{j}}=\pm c$ or $\pm q^{-1}c^{-1}$, hence $\mu_{s_{l+1}}=\pm q^{-1}c^{-1}$ or $\pm c$. In the first case, we we keep $\mu'_{s_i}=\mu_{s_i},i\in I_2$. In the second case, we make a change of variables $\mu_{s_i}'=q^{-1}\mu_{s_{i}}^{-1},k\leq i \leq l+1$ giving $[\mu_{s_i};1][\mu_{s_{i+1}};0]=[\mu_{s_{i+1}}';1][\mu_{s_{i}}';0]$ and $\mu_{s_{i}}\mu_{s_{i+1}}^{-1}=\mu_{s_{i+1}}'\mu_{s_{i}}'^{-1}$, hence these  relations are preserved. In each case above, with our choice of $\mu'_{s_i}$, we have $\lambda_{s_{l+1},s_{j}}=\pm \mu'_{s_{l+1}}$. Therefore $\lambda_{s_i,s_j}=\lambda_{s_i,s_{l+1}}\lambda_{s_{l+1},s_j}=\pm \mu_i'$ for $i\in I_2$ and $j\in I_3$. Finally, for $i\in I_1$ and $j\in I_2$, since $\lambda_{s_{i},s_{j}}\neq \pm 1$ and $\lambda_{s_{j},s_{l+2}}\neq \pm 1$, we must have $\lambda_{s_{i},s_{j}}\lambda_{s_{j},s_{l+2}}=\pm q^{-1}$. Therefore $\lambda_{s_{i},s_{j}}=\pm q^{-1}\mu_{j}'^{-1}$.

Summarizing, we have:
\begin{equation*}
\lambda_{s_i,s_j}=\begin{cases}
\pm q^{-1}\mu_j'^{-1}& \text{for }i\in I_1,j\in I_2\\
\pm \mu_i'&\text{for }i\in I_2,j\in I_3\\
\pm \mu_i'\mu_j'^{-1}&\text{for }i<j\in I_2,\\
\pm q^{-1} &\text{for }i\in I_1,j\in I_3\\
\pm 1& \text{otherwise.}
\end{cases}
\end{equation*}
Therefore, we can put $\mu_i'=\pm q^{-1}$ for $i\in I_1$ and $\mu_j'=\pm 1$ for $j\in I_3$. Hence, for $i<j\in \{1,2,\dots, n+1\}$ we have $\lambda_{s_i,s_j}=\mu_{s_i}'\mu_{s_j}'^{-1}$ and $z_{s_i,s_j}=[\mu_{s_i}';1][\mu_{s_j}';0]$.

It follows that for $1\leq i  \leq n+1$ we have $F_iE_i\cdot v_\lambda=[\mu_i;1][\mu_{i+1};0]\cdot v_\lambda=[\bar{K}_i;1][\bar{K}_{i+1};0]\cdot v_\lambda$ for some weight vector $v_\lambda\in V$. It remains to show that the same holds for any weight vector in $V$. Let $\varepsilon_j-\varepsilon_k\in T\cup N_s^-$ be given, and suppose $j<k$ (the case $k<j$ is symmetric). If $i\neq j-1,j,k-1$ or $k$ then $E_{\varepsilon_j-\varepsilon_k}$ commutes with $F_iE_i$, $\bar{K}_i$ and $\bar{K}_{i+1}$, hence we have 
\begin{equation*}
F_iE_iE_{\varepsilon_j-\varepsilon_k}\cdot v^+=[\bar{K}_i;1][\bar{K}_{i+1};0]E_{\varepsilon_j-\varepsilon_k}\cdot v^+.
\end{equation*}
First, suppose $i=j-1$. We compute:
\begin{align*}
F_iE_iE_{\varepsilon_{i+1}-\varepsilon_k}\cdot v^+&=-F_iE_{\varepsilon_i-\varepsilon_k}\cdot v^+ +q^{-1}F_iE_{\varepsilon_{i+1}-\varepsilon_k}E_i\cdot v^+\\
	&=-\kappa_{i+1,i,k}E_{\varepsilon_{i+1}-\varepsilon_k}\cdot v^+ +q^{-1}E_{\varepsilon_{i+1}-\varepsilon_k}F_iE_i\cdot v^+\\
	&=\mu_{i+1}[\mu_i;1]E_{\varepsilon_{i+1}-\varepsilon_k}\cdot v^++q^{-1}[\mu_i;1][\mu_{i+1};0]E_{\varepsilon_{i+1}-\varepsilon_k}\cdot v^+\\
	&=[\mu_i;1][\mu_{i+1};1]E_{\varepsilon_{i+1}-\varepsilon_k}\cdot v^+\\
	&=[\bar{K}_i;1][\bar{K}_{i+1};0]E_{\varepsilon_{i+1}-\varepsilon_k}\cdot v^+,
\end{align*}
and similarly if $i=k$. If $i=k-1$ and $i>j$ then we compute:
\begin{align*}
F_iE_iE_{\varepsilon_j-\varepsilon_{i+1}}\cdot v^+&=qF_iE_{\varepsilon_j-\varepsilon_{i+1}}E_i\cdot v^+\\
	&=q^2K_{j,i+1}^{-1}E_{-\varepsilon_i+\varepsilon_j}E_i\cdot v^++qE_{\varepsilon_j-\varepsilon_{i+1}}F_iE_i\cdot v^+\\
	&=\mu_j^{-1}\mu_{i+1}\kappa_{j,i,i+1}E_{\varepsilon_j-\varepsilon_{i+1}}\cdot v^++q[\mu_i;1][\mu_{i+1};0]E_{\varepsilon_j-\varepsilon_{i+1}}\cdot v^+\\
	&=-\mu_{i+1}[\mu_i;1]E_{\varepsilon_j-\varepsilon_{i+1}}\cdot v^++q[\mu_i;1][\mu_{i+1};0]E_{\varepsilon_j-\varepsilon_{i+1}}\cdot v^+\\
	&=[\mu_i;1][\mu_{i+1};-1]E_{\varepsilon_j-\varepsilon_{i+1}}\cdot v^+\\
	&=[\bar{K}_i;1][\bar{K}_{i+1};0]E_{\varepsilon_j-\varepsilon_{i+1}}\cdot v^+,
\end{align*}
and similarly if $i=j$ and $i<k$. If $i=j$ and $i=k-1$ it is easy to see that $F_iE_iE_i\cdot v^+=[\bar{K}_i;1][\bar{K}_{i+1};0]E_i\cdot v^+$. Since $V$ is irreducible, it is generated by $v^+$, hence it is equal to $U_q(\mathfrak{p})\cdot v^+$. The result follows by induction on the degree of monomials in $U_q(\mathfrak{g})$.
\end{proof}

\section{Construction of irreducible completely pointed modules}

In this section we find a quantum version of the  construction in \cite{BBL} of irreducible completely pointed weight $\mathfrak{gl}_{n+1}$-modules.
Then we show that any irreducible completely pointed $U_q(\mathfrak{gl}_{n+1})$-module occurs in this way.

As in \cite[Theorem 3.2]{Hayashi}, one checks that there is an $\mathbb{F}$-algebra homomorphism
\begin{equation}\label{eq:pi_map}
\begin{aligned}
\pi:U_q(\mathfrak{gl}_{n+1})&\longrightarrow A_{n+1}^q \\
E_i &\longmapsto x_iy_{i+1}, \\
F_i &\longmapsto x_{i+1}y_i, \\
\bar K_i &\longmapsto \omega_i.
\end{aligned}
\end{equation}
where $A_{n+1}^q$ is the \emph{quantized Weyl algebra}, defined as the associative unital $\mathbb{F}$-algebra with generators $\omega_i, \omega_i^{-1}, x_i, y_i$, $i\in\{1,2,\ldots,n+1\}$ and defining relations
\begin{subequations}\label{eq:Anq_rels}
\begin{align}
\omega_i\omega_j &= \omega_j\omega_i, \\
\omega_i\omega_i^{-1}&=\omega_i^{-1}\omega_i = 1, \\ 
\omega_ix_j\omega_i^{-1} &= q^{\delta_{ij}}x_j, \\ 
\omega_iy_j\omega_i^{-1} &= q^{-\delta_{ij}}y_j,\\ 
y_i x_j &= x_j y_i, \quad i\neq j, \\
y_ix_i - q^{-1} x_i y_i&=\omega_i, \label{eq:Anq_rels_yixi1}\\
y_ix_i - q x_iy_i &= \omega_i^{-1}
\end{align}
where $i,j\in\{1,2,\ldots,n+1\}$.
The last two relations are equivalent to the two relations
\begin{equation}\label{eq:Anq_rels_ti}
y_ix_i =
 \frac{q\omega_i - (q\omega_i)^{-1}}{q-q^{-1}},\qquad
x_iy_i=
 \frac{\omega_i - \omega_i^{-1}}{q-q^{-1}}
\end{equation}
\end{subequations}
Thus, $A_{n+1}^q$ is isomorphic to the rank $n$ generalized Weyl algebra $R(\sigma,t)$ where $R=\mathbb{F}[\omega_1^{\pm 1},\ldots,\omega_{n+1}^{\pm 1}]$, $\sigma_j(\omega_i)=q^{-\delta_{ij}}\omega_i$, $t_i= \frac{q\omega_i - (q\omega_i)^{-1}}{q-q^{-1}}$ for $i\in\{1,2,\ldots,n+1\}$.

The central element $I_{n+1}=\bar K_1\cdots \bar K_{n+1}$ of $U_q(\mathfrak{gl}_{n+1})$ is mapped by $\pi$ to the element $\mathbb{E}_q:=\omega_1\omega_2\cdots\omega_{n+1}$. $\mathbb{E}_q$ should be thought of as $q^{\sum_i x_i\partial i}$: a $q$-analogue of the Euler operator.

\begin{lemma}\label{lem:qEuler}
The following identities hold.
\begin{gather}
\label{eq:qEuler1}
\mathbb{E}_q x_i \mathbb{E}_q^{-1} = q x_i,
\quad
\mathbb{E}_q y_i \mathbb{E}_q^{-1} = q^{-1} y_i,
\quad 
\mathbb{E}_q \omega_i \mathbb{E}_q^{-1} = \omega_i,
\qquad i\in\{1,2,\ldots,n+1\},\\ 
\label{eq:qEuler2}
A_{n+1}^q = \bigoplus_{m\in\mathbb{Z}} A_{n+1}^q[m],\qquad
A_{n+1}^q[m] = \big\{a\in A_{n+1}^q\mid \mathbb{E}_q a\mathbb{E}_q^{-1} = q^m a\big\}, \\
\label{eq:qEuler3}
A_{n+1}^q[m_1]\cdot A_{n+1}^q[m_2]\subseteq A_{n+1}^q[m_1+m_2], \\ 
\label{eq:qEuler4}
\pi\big(U_q(\mathfrak{gl}_{n+1})\big) = A_{n+1}^q[0] = C_{A_{n+1}^q}(\mathbb{E}_q).
\end{gather}
where $C_{A_{n+1}^q}(\mathbb{E}_q)$ denotes the centralizer of $\mathbb{E}_q$ in $A_{n+1}^q$.
\end{lemma}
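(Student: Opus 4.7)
The plan is the following. Identity \eqref{eq:qEuler1} is immediate from the defining relations \eqref{eq:Anq_rels}: the $\omega_j$'s pairwise commute, and among the factors of $\mathbb{E}_q=\omega_1\cdots\omega_{n+1}$ only $\omega_i$ contributes a non-trivial scalar when conjugating $x_i$ or $y_i$, yielding the three stated formulas in one line each.

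For \eqref{eq:qEuler2} and \eqref{eq:qEuler3} I would invoke the generalized Weyl algebra description of $A_{n+1}^q$ already recorded in the paragraph preceding the lemma: it supplies a PBW basis of ordered monomials $\omega^\gamma x^\alpha y^\beta$ with $\alpha,\beta\in\mathbb{Z}_{\geq 0}^{n+1}$ and $\gamma\in\mathbb{Z}^{n+1}$. By \eqref{eq:qEuler1} each such basis element is an eigenvector for $\operatorname{Ad}(\mathbb{E}_q)$ of weight $|\alpha|-|\beta|$, so sorting the basis by weight gives the decomposition \eqref{eq:qEuler2}. The grading property \eqref{eq:qEuler3} is automatic because $\operatorname{Ad}(\mathbb{E}_q)$ is an algebra automorphism.

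Identity \eqref{eq:qEuler4} would be obtained by verifying four inclusions. The containment $\pi(U_q(\mathfrak{gl}_{n+1}))\subseteq A_{n+1}^q[0]$ is immediate from \eqref{eq:pi_map}, since each of the generators $E_i,F_i,\bar K_i$ maps to an element of $\mathbb{E}_q$-weight zero. The containment $A_{n+1}^q[0]\subseteq C_{A_{n+1}^q}(\mathbb{E}_q)$ is tautological. For the reverse $C_{A_{n+1}^q}(\mathbb{E}_q)\subseteq A_{n+1}^q[0]$, decompose $a=\sum_m a_m$ according to \eqref{eq:qEuler2}: the identity $\mathbb{E}_q a=a\mathbb{E}_q$ forces $q^m a_m=a_m$ for every $m$, and since $q$ is not a root of unity this forces $a_m=0$ for $m\neq 0$.

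The main obstacle is the remaining inclusion $A_{n+1}^q[0]\subseteq \pi(U_q(\mathfrak{gl}_{n+1}))$. My plan is to prove by induction on $|\alpha|=|\beta|$ that every PBW basis monomial $\omega^\gamma x^\alpha y^\beta$ in $A_{n+1}^q[0]$ belongs to the image; since $\omega_i^{\pm 1}=\pi(\bar K_i^{\pm 1})$, it suffices to treat $x^\alpha y^\beta$ modulo a Laurent monomial in the $\omega_k$'s. The base case reduces to showing $x_iy_j\in \pi(U_q(\mathfrak{gl}_{n+1}))\cdot\mathbb{F}[\omega_1^{\pm 1},\ldots,\omega_{n+1}^{\pm 1}]$ for all pairs $i,j$: the diagonal case $i=j$ is exactly \eqref{eq:Anq_rels_ti}, the adjacent case $|i-j|=1$ is \eqref{eq:pi_map} applied to $E_i$ or $F_i$, and the remaining case $|i-j|\geq 2$ is handled by induction on $|i-j|$ using the bracket formulas \eqref{eq:qg1}--\eqref{eq:qg2} for the longer root vectors together with a direct computation under $\pi$ (the representative case is $\pi(E_{\varepsilon_i-\varepsilon_{i+2}})=-x_i\omega_{i+1}y_{i+2}$, whose verification uses the cross-commutation $y_{i+1}x_{i+1}=q^{-1}x_{i+1}y_{i+1}+\omega_{i+1}$). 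For the inductive step, one uses \eqref{eq:qEuler3} to peel off a single factor $x_iy_j$ from the right of $x^\alpha y^\beta$, rewriting the remaining expression via the commutation relations of $A_{n+1}^q$ as a sum of weight-zero monomials of strictly smaller total $x$-degree to which the induction hypothesis applies. The only technical nuisance will be keeping track of the $\omega$-corrections produced each time a $y_k$ is commuted past an $x_k$ via \eqref{eq:Anq_rels_yixi1}; since all such corrections stay inside the image after multiplication by the appropriate $\omega_k^{\pm 1}$, the induction closes.
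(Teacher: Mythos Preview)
Your proposal is correct and follows essentially the same route as the paper. The one simplification you miss is in the inclusion $A_{n+1}^q[0]\subseteq\pi(U_q(\mathfrak{gl}_{n+1}))$: the paper notes that the generalized Weyl algebra structure lets one take the spanning monomials to satisfy $k_i l_i=0$, and any such monomial is \emph{already} a product of factors $x_iy_j$ with $i\neq j$ (since $y_kx_i=x_iy_k$ for $k\neq i$), so no inductive ``peeling'' step is needed; for those factors the paper uses the recursion $x_iy_j=\omega_{j-1}^{-1}[x_iy_{j-1},\pi(E_{j-1})]_q$, which is equivalent to your computation of $\pi$ on the Lusztig root vectors.
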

\begin{proof}
The identities \eqref{eq:qEuler1} follow directly from the commutation relations \eqref{eq:Anq_rels} in $A_{n+1}^q$. Identities \eqref{eq:qEuler2} and \eqref{eq:qEuler3} follow from \eqref{eq:qEuler1} and that $A_{n+1}^q$ is generated by $x_i, y_i$ and $\omega_i$. The second equality of \eqref{eq:qEuler4} is trivial. By definition, \eqref{eq:pi_map}, of $\pi$ it is clear that $\pi(E_i),\pi(F_i),\pi(\bar K_i)\in A_{n+1}^q[0]$ for all $i\in\{1,\ldots,n\}$. Since $U_q(\mathfrak{gl}_{n+1})$ is generated by the set $\{E_i,F_i,\bar K_i\}_{i=1}^{n+1}$, it follows that $\pi\big(U_q(\mathfrak{gl}_n)\big)\subseteq A_n^q[0]$. It remains to prove that 
$A_{n+1}^q[0]\subseteq \pi\big(U_q(\mathfrak{gl}_{n+1})\big)$.
First observe that $A_{n+1}^q[0]$ is invariant under left multiplication by elements from $R=\mathbb{F}[\omega_i^{\pm 1}\mid i=1,\ldots,n+1]$.
Since $A_{n+1}^q$ is a generalized Weyl algebra, it follows that $A_{n+1}^q[0]$ is generated as a left $R$-module by all monomials
\[a=x_1^{k_1}x_2^{k_2}\cdots x_{n+1}^{k_{n+1}}y_1^{l_1}y_2^{l_2}\cdots y_{n+1}^{l_{n+1}}\]
where $k,l\in (\mathbb{Z}_{\ge 0})^{n+1}$ are such that $\sum_i k_i=\sum_i l_i$ and $k_i\cdot l_i=0$ for all $i\in\{1,2,\ldots,n+1\}$.
Since any such monomial $a$ is a product of elements of the form $x_iy_j$, where $i\neq j$, it suffices to show that $x_iy_j$ lies in the image of $\pi$ for any $i\neq j$. We prove by induction on $j$ that $x_iy_j\in\pi\big(U_q(\mathfrak{gl}_{n+1})\big)$ whenever $i<j$. If $j=i+1$, then $x_iy_{i+1}=\pi(E_i)$. If $j>i+1$, note that by \eqref{eq:Anq_rels_yixi1},
\begin{equation}
x_iy_j=\omega_{j-1}^{-1}[x_iy_{j-1}, \pi(E_{j-1})]_q
\end{equation}
(recalling that $[a,b]_u:=ab-uba$), which by the induction hypothesis lies in the image of $\pi$. Similarly one can use $\pi(F_i)$ to prove that $x_iy_j\in\pi\big(U_q(\mathfrak{gl}_{n+1})\big)$ if $i>j$. This finishes the proof of \eqref{eq:qEuler4}.
\end{proof}

\begin{lemma}\label{lem:Anq_cptd}
Let $V$ be an irreducible $A_{n+1}^q$ weight module and $\mathfrak{m}\in\mathrm{Specm}(R)$ with $V_\mathfrak{m}\neq 0$. Then $\dim_{R/\mathfrak{m}} V_\mathfrak{m}=1$. If in addition $\dim_{\mathbb{F}}R/\mathfrak{m}=1$, then $V$ is completely pointed.
\end{lemma}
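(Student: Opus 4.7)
The plan is to exploit the $\mathbb{Z}^{n+1}$-grading on $A:=A_{n+1}^q$ arising from its generalized Weyl algebra structure. Setting $\deg\omega_i=0$, $\deg x_i=e_i$, $\deg y_i=-e_i$, the relations in \eqref{eq:Anq_rels} are homogeneous, so $A=\bigoplus_{\alpha\in\mathbb{Z}^{n+1}}A[\alpha]$. The standard monomial basis $\omega^a x^b y^c$ (with $b_ic_i=0$ for all $i$) of the generalized Weyl algebra makes it clear that $A[0]=R$, and each $a\in A[\alpha]$ satisfies $ar=\sigma^{-\alpha}(r)a$ for $r\in R$, where $\sigma^\alpha:=\prod_i\sigma_i^{\alpha_i}$. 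From this twisted commutation it follows immediately that $A[\alpha]\cdot V_\mathfrak{n}\subseteq V_{\sigma^\alpha\mathfrak{n}}$ for every $\mathfrak{n}\in\mathrm{Specm}(R)$.

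Fixing a nonzero $v\in V_\mathfrak{m}$, irreducibility of $V$ gives $V=Av=\sum_\alpha A[\alpha]v\subseteq\sum_\alpha V_{\sigma^\alpha\mathfrak{m}}$. The heart of the argument is to show that the induced $\mathbb{Z}^{n+1}$-action on $\mathrm{Specm}(R)$ is free. Suppose $\sigma^\alpha\mathfrak{m}=\mathfrak{m}$ with some $\alpha_j\ne 0$; then $\sigma^\alpha$ must permute the finite Galois orbit in $(\bar{\mathbb{F}}^\times)^{n+1}$ corresponding to $\mathfrak{m}$. But $\sigma^\alpha$ multiplies the $j$-th coordinate of each point by $q^{-\alpha_j}$ per application, so, $q$ not being a root of unity, iteration produces infinitely many distinct $j$-th coordinates from any single point, contradicting finiteness. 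Consequently the weight spaces $V_{\sigma^\alpha\mathfrak{m}}$ are pairwise distinct as $\alpha$ varies, and the decomposition above forces $V_\mathfrak{m}=A[0]v=Rv$. Since $v\ne 0$ and $\mathfrak{m}\subseteq\mathrm{Ann}_R v\subsetneq R$ with $\mathfrak{m}$ maximal, one has $\mathrm{Ann}_R v=\mathfrak{m}$, giving $V_\mathfrak{m}\cong R/\mathfrak{m}$, whence $\dim_{R/\mathfrak{m}}V_\mathfrak{m}=1$.

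For the second assertion, $\sigma^\alpha$ is an $\mathbb{F}$-algebra automorphism of $R$, hence $\dim_\mathbb{F} R/\mathfrak{m}=1$ implies $\dim_\mathbb{F} R/\sigma^\alpha\mathfrak{m}=1$ for every $\alpha\in\mathbb{Z}^{n+1}$. Applying the first part to each $\sigma^\alpha\mathfrak{m}$ in the support of $V$ then yields that every weight space is one-dimensional over $\mathbb{F}$, i.e. $V$ is completely pointed. The only real obstacle I anticipate is the freeness of the $\sigma$-action on $\mathrm{Specm}(R)$ worked out above; everything else is bookkeeping given the grading and the generalized Weyl algebra description noted just before the lemma.
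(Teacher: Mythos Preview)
Your proof is correct and follows essentially the same strategy as the paper: both exploit the generalized Weyl algebra grading and the freeness of the $\mathbb{Z}^{n+1}$-action on $\mathrm{Specm}(R)$ (which follows from $q$ not being a root of unity) to conclude that each weight space is a cyclic $R$-module, hence isomorphic to $R/\mathfrak{m}$. The paper's version is terser, invoking a general result from \cite{MazPonTur2003} on cyclic subalgebras of generalized Weyl algebras where you argue directly, but the underlying content is the same.
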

\begin{proof} Let $A=A_{n+1}^q$. Since $A$ is a generalized Weyl algebra and $V$ is an irreducible weight $A$-module, each weight space $V_{\mathfrak{m}}$ is an irreducible $C(\mathfrak{m})$-module, where $C(\mathfrak{m})=\bigoplus_{g\in\mathrm{Stab}_{\mathbb{Z}^{n+1}}(\mathfrak{m})} A_g$ is the cyclic subalgebra of $A$ with respect to $\mathfrak{m}$, (see e.g. \cite[Prop.~7.1]{MazPonTur2003}).
Since $q$ is not a root of unity, the action of $\mathbb{Z}^{n+1}$ on $\mathrm{Specm}(R)$ is faithful, and therefore $C(\mathfrak{m})=R$, which is commutative. This implies that $\dim_{R/\mathfrak{m}} V_{\mathfrak{m}}\le 1$. The second claim follows the fact that the support of an indecomposable weight module over a generalized Weyl algebra is invariant under the automorphisms $\sigma_1,\ldots,\sigma_n$ and that $\dim_\mathbb{F} R/\mathfrak{m}=\dim_{\mathbb{F}} R/\tau(\mathfrak{m})$ for any $\mathbb{F}$-algebra automorphism $\tau$ of $R$.
\end{proof}

Let $\mathrm{Specm}^1(R)$ denote the set of all maximal ideals $\mathrm{m}$ of $R$ such that $R/\mathfrak{m}$ is one-dimensional over 
$\mathbb{F}$. Thus $\mathfrak{m}=(\omega_1-\mu_1,\ldots,\omega_{n+1}-\mu_{n+1})$, where $(\mu_1,\ldots,\mu_{n+1})\in(\mathbb{F}^\times)^{n+1}$. 
%

\begin{mainthmiia}\label{thm:pointed}
Let $W$ be an irreducible completely pointed $A_n^q$-module. Let $\pi^\ast W$ be the $U_q(\mathfrak{gl}_{n+1})$-module, given as the $\pi$-pullback of $W$, where $\pi$ is the map \eqref{eq:pi_map}.
Then $\pi^\ast W$ is completely reducible, and each irreducible submodule is completely pointed, and occurs with multiplicity one.
\end{mainthmiia}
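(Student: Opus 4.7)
The plan is to exploit the $\mathbb{Z}$-grading $A_{n+1}^q = \bigoplus_{m\in\mathbb{Z}} A_{n+1}^q[m]$ from Lemma~\ref{lem:qEuler}, under which $\pi(U_q(\mathfrak{gl}_{n+1}))=A_{n+1}^q[0]=C_{A_{n+1}^q}(\mathbb{E}_q)$. Since $\mathbb{E}_q=\omega_1\cdots\omega_{n+1}\in R$ and $W$ is an $R$-weight module, $\mathbb{E}_q$ acts diagonalizably on $W$, so I would introduce the decomposition $W=\bigoplus_{\mu\in\mathbb{F}^\times}W(\mu)$ into $\mathbb{E}_q$-eigenspaces, where $W(\mu):=\{w\in W\mid \mathbb{E}_q\cdot w=\mu w\}$. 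Because elements of $A_{n+1}^q[m]$ shift $\mathbb{E}_q$-eigenvalues by the factor $q^m$, the degree-zero part preserves each $W(\mu)$, so each $W(\mu)$ is a $U_q(\mathfrak{gl}_{n+1})$-submodule of $\pi^\ast W$.

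Second, I would show each non-zero $W(\mu)$ is $U_q(\mathfrak{gl}_{n+1})$-irreducible by a standard graded-module argument. Given $0\ne w\in W(\mu)$ and any target $w'\in W(\mu)$, the $A_{n+1}^q$-irreducibility of $W$ provides some $a\in A_{n+1}^q$ with $a\cdot w=w'$; writing $a=\sum_m a_m$ with $a_m\in A_{n+1}^q[m]$ and using $a_m\cdot w\in W(q^m\mu)$, projection along the direct sum $W=\bigoplus_\nu W(\nu)$ forces $w'=a_0\cdot w\in \pi(U_q(\mathfrak{gl}_{n+1}))\cdot w$, so $W(\mu)$ is cyclic on any non-zero vector.

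Complete pointedness of each $W(\mu)$ is then automatic: the $U_q(\mathfrak{gl}_{n+1})$-weight spaces of $\pi^\ast W$ are the joint eigenspaces of $\bar K_1,\ldots,\bar K_{n+1}$, which act via $\omega_1,\ldots,\omega_{n+1}$, so the $U_q$-weight spaces of $W(\mu)$ are intersections with the $R$-weight spaces of $W$ and are therefore at most one-dimensional by the hypothesis on $W$. For the multiplicity-one assertion, I would note that $I_{n+1}:=\bar K_1\cdots\bar K_{n+1}$ is central in $U_q(\mathfrak{gl}_{n+1})$ (an easy check from relation (1)) and satisfies $\pi(I_{n+1})=\mathbb{E}_q$, which acts as the scalar $\mu$ on $W(\mu)$. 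Distinct $\mu$'s thus produce summands with distinct central characters and hence non-isomorphic summands, so $\pi^\ast W=\bigoplus_\mu W(\mu)$ is simultaneously a decomposition into pairwise non-isomorphic irreducibles.

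The only step requiring real argument is the irreducibility of each $W(\mu)$; the rest is bookkeeping with the grading, the action of $R$, and the central character of $I_{n+1}$ under $\pi$. I do not foresee any serious obstacle, since all the needed structural information is already packaged in Lemma~\ref{lem:qEuler} and in the definition of $\pi$.
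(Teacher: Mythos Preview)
Your proposal is correct and follows essentially the same route as the paper: decompose $W$ into $\mathbb{E}_q$-eigenspaces, observe these are $A_{n+1}^q[0]=\pi(U_q(\mathfrak{gl}_{n+1}))$-submodules, check they are completely pointed via $\pi(\bar K_i)=\omega_i$, and prove each is irreducible. The one place your argument differs is in the irreducibility step. The paper uses the generalized Weyl algebra monomial basis: between two given $R$-weight spaces there is at most one standard monomial $a=x_1^{k_1}\cdots x_{n+1}^{k_{n+1}}y_1^{l_1}\cdots y_{n+1}^{l_{n+1}}$, and staying in the same $\mathbb{E}_q$-eigenspace forces $\sum k_i=\sum l_i$, whence $a\in A_{n+1}^q[0]$. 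Your argument instead decomposes an arbitrary $a\in A_{n+1}^q$ into its $\mathbb{Z}$-graded components and projects onto $W(\mu)$, which is cleaner and uses only the grading from Lemma~\ref{lem:qEuler} together with $q$ not being a root of unity (so that $q^m\mu\neq\mu$ for $m\neq 0$); you might make that last point explicit. You also spell out the multiplicity-one claim via the central element $I_{n+1}$ acting by distinct scalars on distinct summands, which the paper leaves implicit.
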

\begin{proof}
Since $\mathbb{E}_q\in R$, the $A_{n+1}^q$-module $W$ decomposes in particular into eigenspaces with respect to $\mathbb{E}_q$. Due to the commutation relations in $A_{n+1}^q$, the ratio of any two eigenvalues is a power of $q$. That is, there exists a non-zero $\xi\in\mathbb{F}$ such that
\begin{equation}
W=\bigoplus_{m\in\mathbb{Z}}W[m],\qquad
W[m]=\big\{w\in W\mid \mathbb{E}_q w = \xi q^m w\big\}
\end{equation}
Each $W[m]$ is a direct sum of certain $R$-weight spaces of $W$. More precisely, for each $m\in\mathbb{F}$ we have
\[
W[m]=\bigoplus_{\substack{\mathfrak{m}\in\mathrm{Specm}^1(R)\\ \mathbb{E}_q-q^m\xi\in\mathfrak{m}}} W_\mathfrak{m}.
\] 
By Lemma \ref{lem:qEuler}, each subspace $W[m]$ is an $U_q(\mathfrak{gl}_{n+1})$-submodule of $W$.
Since $\pi(\bar{K}_i)=\omega_i$ for each $i\in\{1,\ldots,n+1\}$,
and $W$ is completely pointed as an $A_{n+1}^q$-module, it follows that each $W[m]$, $m\in\mathbb{Z}$ is a completely pointed $U_q(\mathfrak{gl}_{n+1})$-module.
It remains to prove that for each $m\in\mathbb{Z}$, the $U_q(\mathfrak{gl}_{n+1})$-module $W[m]$ is either zero or irreducible.
By \eqref{eq:qEuler4}, proving that $W[m]$ is irreducible as an $U_q(\mathfrak{gl}_{n+1})$-module is the same thing as proving that $W[m]$ is irreducible as an $A_{n+1}^q[0]$-submodule of $W$.

Suppose $W[m]\neq \{0\}$ and let $w_0$ and $w_1$ be any two non-zero weight vectors of $W[m]$ of weights $\mathfrak{m}_0$ and $\mathfrak{m}_1$ respectively. $A_{n+1}$ is generated as a left $R$-module by monomials of the form
\[a=x_1^{k_1}x_2^{k_2}\cdots x_{n+1}^{k_{n+1}}\cdot y_1^{l_1}y_2^{l_2}\cdots y_{n+1}^{l_{n+1}},\]
where $k,l\in(\mathbb{Z}_{\ge 0})^{n+1}$ and $k_il_i=0$ for each $i$.
Moreover, there is at most one such monomial $a$ such that $(aW_{\mathfrak{m}_0})\cap W_{\mathfrak{m}_1}\neq\{0\}$. Since $W$ is irreducible as an $A_{n+1}$-module, there exists $r\in R$ and a single monomial $a$ such that $raw_0=w_1$.
Since $w_0,w_1\in W[m]$, this forces $\sum_i k_i=\sum_i l_i$, which implies that $a\in A_{n+1}^q[0]$. This proves that $W[m]$ is irreducible as an $A_{n+1}^q[0]$-module.
\end{proof}

The cyclic algebra of $U_q(\mathfrak{g})$---$C(U_q(\mathfrak{g}))$---is defined to be the subalgebra of all elements commuting with $K_i^{\pm 1}, i\in \{1,2,\dots, n+1\}$.

\begin{lemma}\label{lem:kerpi_AnnV}
Let $V$ be an irreducible, infinite dimensional, completely pointed $U_q(\mathfrak{sl}_{n+1})$-module. Then $\ker \pi\subseteq \mathrm{Ann}_{U_q(\mathfrak{sl}_{n+1})} V$, where $\pi$ is the map \eqref{eq:pi_map}.
\end{lemma}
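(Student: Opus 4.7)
The plan is to use Theorem I to extend $V$ to a $U_q(\mathfrak{gl}_{n+1})$-module, exploit the weight grading of $\pi$ to reduce to weight-zero elements of $\ker\pi$, and then use Theorem I to show that the action of the cyclic subalgebra on $V$ factors through $\pi$.

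First, by Theorem I, I extend $V$ to a $U_q(\mathfrak{gl}_{n+1})$-module on which $F_iE_i\cdot v_\lambda = [\bar K_i;1][\bar K_{i+1};0]\cdot v_\lambda$ for every weight vector $v_\lambda$. Observe that $\pi$ is graded for the $\mathbb{Z}\Phi$-weight grading (with $\mathrm{wt}(x_i)=\varepsilon_i$, $\mathrm{wt}(y_i)=-\varepsilon_i$, $\mathrm{wt}(\omega_i)=0$), so $\ker\pi$ is weight-homogeneous. Using the quantum Weyl algebra relations \eqref{eq:Anq_rels_ti}, every weight-zero element of $A_{n+1}^q$ reduces to a polynomial in $\omega_j^{\pm 1}$, so the weight-zero part of $A_{n+1}^q$ equals $R=\mathbb{F}[\omega_1^{\pm 1},\ldots,\omega_{n+1}^{\pm 1}]$. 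Thus $\pi$ restricts to a surjective algebra map $\pi_0 : C \to R$, where $C := U_q(\mathfrak{gl}_{n+1})_0$ is the cyclic subalgebra.

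Next, I reduce the claim to weight-zero elements of $\ker\pi$. Suppose $u \in (\ker\pi)_\mu$ is homogeneous and satisfies $u\cdot v_\lambda \neq 0$ for some weight vector $v_\lambda$. Since $V$ is irreducible and $V_{\lambda+\mu}$ is one-dimensional, there exists $b \in U_q(\mathfrak{gl}_{n+1})_{-\mu}$ with $b\cdot(u\cdot v_\lambda) \neq 0$; then $bu \in C \cap \ker\pi$ and $(bu)\cdot v_\lambda \neq 0$. Hence it suffices to prove that $C \cap \ker\pi$ annihilates every weight vector in $V$.

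Since $V$ is completely pointed, $C$ acts on each one-dimensional weight space $V_\lambda$ by a character $\chi_\lambda: C \to \mathbb{F}$. The desired statement $\chi_\lambda(C\cap \ker\pi)=0$ amounts to the factorisation $\chi_\lambda = \mathrm{ev}_\lambda \circ \pi_0$, where $\mathrm{ev}_\lambda : R \to \mathbb{F}$ sends $\omega_j \mapsto \lambda_j$. Both characters agree on $\bar K_j^{\pm 1}$ (both give $\lambda_j^{\pm 1}$) and on $F_iE_i$ (both give $[\lambda_i;1][\lambda_{i+1};0]$, by Theorem I on the $V$-side and by the direct computation $\pi(F_iE_i)=[\omega_i;1][\omega_{i+1};0]$ using \eqref{eq:Anq_rels_ti} on the $R$-side). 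The main obstacle is extending this agreement from the $R$-subalgebra generated by $\{F_iE_i\}_{i=1}^n$ to all of $C$. I plan to prove, by induction on the height of a positive root $\alpha \in \Phi^+$, that
\begin{equation*}
E_{-\alpha}E_\alpha \cdot v_\lambda = \pi(E_{-\alpha}E_\alpha)\big|_{\omega_j=\lambda_j} \cdot v_\lambda
\end{equation*}
for every weight vector $v_\lambda \in V$, using the identities \eqref{eq:qg1}--\eqref{eq:qg7} to rewrite $E_\alpha$ as a $q$-bracket of lower-height root vectors and propagating this through the product $E_{-\alpha}E_\alpha$, with Theorem I serving as the base case. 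A parallel rewriting inside $A_{n+1}^q$ (using the same relations, which are preserved by $\pi$) shows that $\pi(E_{-\alpha}E_\alpha) \in R$ is produced by the analogous formula, forcing the two sides to agree. Combined with the commutativity of $R$ and the fact that every weight-zero PBW monomial in $C$ can be brought, modulo the commutation relations, to a Cartan element times an ordered product of terms $E_{-\beta}E_\beta$, this yields $\chi_\lambda = \mathrm{ev}_\lambda \circ \pi_0$ and completes the proof.
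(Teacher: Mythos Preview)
Your overall architecture --- extend to $U_q(\mathfrak{gl}_{n+1})$ via Theorem~I, reduce to homogeneous elements of $\ker\pi$, then to weight-zero elements using irreducibility and one-dimensionality of the weight spaces --- matches the paper exactly. The reduction step (your second paragraph) is essentially the paper's final paragraph verbatim.

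The gap is in your treatment of the weight-zero part $C$. Two related problems:

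\emph{First}, the claim that ``every weight-zero PBW monomial in $C$ can be brought, modulo the commutation relations, to a Cartan element times an ordered product of terms $E_{-\beta}E_\beta$'' is unproved and almost certainly false. The cyclic subalgebra $C$ is not generated as an algebra by the Cartan together with $\{E_{-\alpha}E_\alpha:\alpha\in\Phi^+\}$. Already in $U_q(\mathfrak{sl}_3)$, the four weight-zero monomials $F_1F_2E_1E_2$, $F_1F_2E_2E_1$, $F_2F_1E_1E_2$, $F_2F_1E_2E_1$ are linearly independent, while the products $(F_1E_1)(F_2E_2)$, $(F_2E_2)(F_1E_1)$, $E_{-\alpha_1-\alpha_2}E_{\alpha_1+\alpha_2}$ only give you three independent combinations of them (modulo lower order terms). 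So checking $\chi_\lambda=\mathrm{ev}_\lambda\circ\pi_0$ on $\bar K_j$ and on the $E_{-\alpha}E_\alpha$ does not suffice.

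\emph{Second}, even the induction on root height is not grounded: Theorem~I asserts $E_{-\varepsilon_i+\varepsilon_j}E_{\varepsilon_i-\varepsilon_j}\cdot v^+=[\bar K_i;1][\bar K_j;0]\cdot v^+$ only on the special vector $v^+\in V^+$; for an \emph{arbitrary} weight vector $v_\lambda$ it only gives the simple-root identity $F_iE_i\cdot v_\lambda=[\bar K_i;1][\bar K_{i+1};0]\cdot v_\lambda$. Your inductive step would need the full-root identity on every $v_\lambda$, which you do not have.

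The paper circumvents both issues by never introducing non-simple root vectors. It uses that $U_q$ is generated by $E_i,F_i,\bar K_j$, so every element of $C$ is a sum of monomials $K\,F_{\mathbf{i}}E_{\mathbf{j}}$ with $K$ a Laurent monomial in the $\bar K_j$ and $\mathbf{i}$, $\mathbf{j}$ sequences in $\{1,\ldots,n\}$; the weight-zero condition forces $\mathbf{i}$ to be a permutation of $\mathbf{j}$. Then an explicit induction on the common length $l$ of $\mathbf{i}$ and $\mathbf{j}$, pairing each $y_{i_l}$ with the first matching $x_{j_k}$ and using \eqref{eq:Anq_rels_ti}, shows that
\[
\pi(F_{\mathbf{i}}E_{\mathbf{j}})=\prod_{r=1}^l [\omega_{i_r};s_r-s'_r][\omega_{j_r+1};t_r-t'_r]
\]
for certain integers $s_r,s'_r,t_r,t'_r$ depending only on the sequences. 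A parallel computation using \emph{only} the simple-root identity from Theorem~I gives the same polynomial with $\bar K_j$ in place of $\omega_j$ for the action of $F_{\mathbf{i}}E_{\mathbf{j}}$ on $v_\lambda$. This is the key computation you are missing.
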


\begin{proof}
Let $x\in C(U_q(\mathfrak{sl}_{n+1}))$.   Write $x=\sum KF_{\mathbf{i}}E_\mathbf{j}$ where $K\in\mathbb{F}[{K}_1^{\pm1},{K}_2^{\pm1},\dots, {K}_n^{\pm1}]$, and the sequences $\mathbf{i}=(i_1,i_2,\dots, i_l),\mathbf{j}=(j_1,j_2,\dots, j_l)$ are such that $\mathbf{i}$ is a permutation of $\mathbf{j}$.  We have:
\begin{align*}
\pi(x)&=\sum \pi(K)\pi(F_{\mathbf{i}})\pi(E_\mathbf{j})\\
	&=\sum \pi(K)\prod_{r=1}^l(x_{i_r+1}y_{i_r})\prod_{r=1}^l (x_{j_r}y_{j_r+1})\\
	&=\sum \pi(K)\prod_{r=1}^l\big([\omega_{i_r};s_{r}-s'_r][\omega_{j_r+1};t_{r}-t'_{r}]\big)
\end{align*}
where $s_{r}$ (resp. $s'_r$) denotes the number of times the element $i_r$ (resp. $i_r-1$) appears in the sequence $(j_1,j_2,\dots,j_l) \backslash(i_{r+1},i_{r+2},\dots, i_l)$ and $t_{r}$ (resp. $t'_r$) denotes the number of times $j_r+1$ (resp. $j_{r}$) appears in the sequence $ (j_{r+1},j_{r+2},\dots, j_l)$.  We prove this by induction on $l$.
If $l=1$ then we have $s_1=1,s'_1=0,t_1=0,t'_1=0$ and compute:
\begin{align*}
x_{i_1+1}y_{i_1}x_{i_1}y_{i_1+1}&=
x_{i_1+1}\frac{(q\omega_{i_1})-(q\omega_{i_1})^{-1}}{q-q^{-1}}y_{i_1+1}\\
&=[\omega_{i_1};1][\omega_{i_1+1};0]
\end{align*}
For $l>1$ observe that $x_{i_l+1}y_{i_l}$ commutes with $x_{j_k}y_{j_k+1}$ if and only if $j_k\neq i_l$.  Let $k\in \{1,2,\dots l\}$ be the minimum such that $i_l=j_k$ (we know such a $k$ exists since $\mathbf{j}$ is a permutation of $\mathbf{i}$).  We have:
\begin{align*}
\lefteqn{\left (\prod_{r=1}^lx_{i_r+1}y_{i_r}\right )\left (\prod_{r=1}^l x_{j_r}y_{j_r+1}\right )}\\
&=\left (\prod_{r=1}^{l-1}x_{i_r+1}y_{i_r}\right )\left (\prod_{r=1}^{k-1} x_{j_r}y_{j_r+1}\right )(x_{i_l+1}y_{i_l})(x_{j_k}y_{j_{k}+1})\left (\prod_{r=k+1}^lx_{j_r}y_{j_{r}+1}\right )\\
&=\left (\prod_{r=1}^{l-1}x_{i_r+1}y_{i_r}\right )\left (\prod_{r=1}^{k-1}x_{j_r}y_{j_r+1}\right ) t_{i_l}\sigma_{j_k+1}(t_{j_k+1})\left (\prod_{r=k+1}^lx_{j_r}y_{j_{r}+1}\right )\\
&=\left (\prod_{r=1}^{l-1}x_{i_r+1}y_{i_r}\right )\left (\prod_{k\neq r=1}^{l}x_{j_r}y_{j_r+1}\right )
[\omega_{i_l};{s_l-s_l'}][\omega_{j_k+1};t_k-t_k']
\end{align*}
and apply induction to obtain the desired result. 

Now, let $V$ be an infinite-dimensional irreducible completely-pointed $U_q(\mathfrak{sl}_{n+1})$-module. Then, using Theorem I, we extend the $U_q(\mathfrak{sl}_{n+1})$ action on $V$ to a $U_q(\mathfrak{gl}_{n+1})$ action that satisfies $F_iE_i\cdot v=[\bar{K}_i;1][\bar{K}_{i+1};0]\cdot v$ for all weight vectors $v\in V$. By a similar computation we see:
\begin{equation*}
x\cdot v=\sum K\prod_{r=1}^l\big([\bar K_{i_r};s_{r}-s'_r][\bar K_{j_r+1};t_{r}-t'_{r}]\big)\cdot v
\end{equation*}
where $s_r,s_r',t_r,$ and $t_r'$ are as before. Therefore, $\pi(x)$ is a Laurent polynomial in the $\omega_i$ and $x$ acts on $v$ by the same Laurent polynomial evaluated at $\omega_i=\mu_i$.  If $x\in \text{ker}(\pi)$ then this Laurent polynomial must be identically 0, hence $x\cdot v=0$.  Since $V$ is irreducible, $v$ is a cyclic vector, hence $x\in \text{Ann}_{U_q(\mathfrak{sl}_{n+1})}(V)$.

Next we prove $\ker(\pi)\subseteq\mathrm{Ann}_{U_q(\mathfrak{sl}_{n+1})}(V)$. Let $x\in\ker(\pi)$. Without loss of generality we can assume $x$ is homogeneous with respect to the root lattice grading: $x=x_\beta$ for some $\beta\in Q$ and $K_i x K_i^{-1}=q^{\langle \beta, \alpha_i\rangle} x$ for $i=1,2,\ldots,n$. Assume for the sake of contradiction that there exists an irreducible completely pointed weight module $V$ for which $x\cdot V\neq 0$. Then there exists a weight vector $v\in V$ such that $w=x \cdot v\neq 0$. Since $x$ is homogeneous, $w$ is also a weight vector. Since $V$ is irreducible, there exists some homogeneous element $y\in U_q(\mathfrak{sl}_{n+1})$ of degree $-\beta$ such that $y\cdot w=v$. Then $yx$ has degree zero, and thus belongs to the centralizer $C(U_q(\mathfrak{sl}_{n+1}))$ of $K_1,\ldots, K_{n}$. Also, $yx$ belongs to $\ker(\pi)$ since it is an ideal in $U_q$. So $yx\in C(U_q(\mathfrak{sl}_{n+1}))\cap \ker(\pi)$ which by the previous paragraph implies that $(yx)\cdot V=\{0\}$, which contradicts the fact that $(yx)\cdot v_\lambda=v_\lambda\neq 0$.
\end{proof}

\begin{mainthmiib}
Any infinite-dimensional irreducible completely pointed $U_q(\mathfrak{sl}_{n+1})$ is isomorphic to a direct summand of $\pi^{\ast} W$ for some irreducible completely pointed $A_{n+1}^q$-module $W$.
\end{mainthmiib}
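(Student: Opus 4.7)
The plan is to reverse the construction of Theorem II A: starting from $V$, produce an irreducible completely pointed $A_{n+1}^q$-module $W$ whose pullback $\pi^\ast W$ contains $V$ as a direct summand.

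First, by Theorem I I extend the $U_q(\mathfrak{sl}_{n+1})$-action on $V$ to a $U_q(\mathfrak{gl}_{n+1})$-action. The resulting module is still irreducible (since any $U_q(\mathfrak{gl}_{n+1})$-submodule is \emph{a fortiori} a $U_q(\mathfrak{sl}_{n+1})$-submodule) and still completely pointed (the $\bar K_i$ merely refine the $K_i$-weight spaces into one-dimensional $U_q(\mathfrak{gl}_{n+1})$-weight spaces, all lying in $\mathrm{Specm}^1(R)$). By Lemma~\ref{lem:kerpi_AnnV}, $\ker\pi$ annihilates $V$, so $V$ descends to an irreducible module over $\pi(U_q(\mathfrak{gl}_{n+1}))=A_{n+1}^q[0]$, using the identification supplied by Lemma~\ref{lem:qEuler}.

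Next I induce: form $\tilde W = A_{n+1}^q\otimes_{A_{n+1}^q[0]}V$. The $\mathbb{Z}$-grading on $A_{n+1}^q$ from Lemma~\ref{lem:qEuler} makes $\tilde W$ into a graded $A_{n+1}^q$-module $\tilde W=\bigoplus_m \tilde W[m]$ with $\tilde W[0]\cong V$, and $\tilde W$ is an $R$-weight module since $R\subseteq A_{n+1}^q[0]$ and each generator $x_i, y_i$ of $A_{n+1}^q$ shifts $R$-weights by a one-dimensional character. The crucial observation is that any proper $A_{n+1}^q$-submodule $N\subseteq \tilde W$ satisfies $N\cap V=0$: otherwise irreducibility of $V$ as an $A_{n+1}^q[0]$-module would force $V\subseteq N$, whence $N\supseteq A_{n+1}^q\cdot V=\tilde W$, a contradiction. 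Since the union of any chain of such proper submodules still avoids $V$, Zorn's lemma produces a maximal proper submodule $N$; set $W=\tilde W/N$. Then $W$ is an irreducible weight $A_{n+1}^q$-module whose $R$-weights, being $\mathbb{Z}^{n+1}$-translates of weights of $V$, all lie in $\mathrm{Specm}^1(R)$, so Lemma~\ref{lem:Anq_cptd} forces $W$ to be completely pointed. Theorem II A now decomposes $\pi^\ast W=\bigoplus_m W[m]$ into irreducible completely pointed $U_q(\mathfrak{gl}_{n+1})$-summands of multiplicity one; the composite $V\hookrightarrow\tilde W\twoheadrightarrow W$ is injective (as $N\cap V=0$) and lands in $W[0]$, so by irreducibility $V\cong W[0]$, exhibiting $V$ as a summand of $\pi^\ast W$ as a $U_q(\mathfrak{gl}_{n+1})$-module, hence also as a $U_q(\mathfrak{sl}_{n+1})$-module.

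The main obstacle I anticipate is verifying the two structural assertions about the induced module: that $\tilde W[0]\cong V$ and that $\tilde W$ is a weight module with $R$-support inside $\mathrm{Specm}^1(R)$. The former rests on $A_{n+1}^q$ being a graded $(A_{n+1}^q[0],A_{n+1}^q[0])$-bimodule with degree-zero component equal to $A_{n+1}^q[0]$, a consequence of the generalized Weyl algebra structure together with $R\subseteq A_{n+1}^q[0]$. The latter requires tracking how products of the $x_i, y_j$ distribute $R$-weights across the $\mathbb{Z}$-graded components --- routine but notationally heavy work using the commutation relations~\eqref{eq:Anq_rels}. A secondary subtlety is confirming that the Zorn's lemma step produces a genuinely simple quotient $W$ rather than merely a maximal one in some weaker sense, but this is automatic once the argument is phrased in terms of submodules disjoint from $V$.
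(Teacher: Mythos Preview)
Your proposal is correct and follows essentially the same route as the paper: extend to $U_q(\mathfrak{gl}_{n+1})$ via Theorem~I, use Lemma~\ref{lem:kerpi_AnnV} to pass to an $A_{n+1}^q[0]$-module, induce up to $A_{n+1}^q$, quotient by a maximal submodule missing the degree-zero piece, and identify $V$ with the resulting $W[0]$. The only cosmetic differences are that the paper tensors over $U_q$ rather than over $A_{n+1}^q[0]$ (equivalent once $\ker\pi\subseteq\mathrm{Ann}\,V$ is known), and constructs $N$ as the sum of all graded submodules disjoint from $\widetilde W[0]$ rather than via Zorn's lemma---again equivalent, since every submodule of the weight module $\widetilde W$ is automatically graded.
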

\begin{proof}
Let $V$ be an irreducible completely pointed $U_q(\mathfrak{sl}_{n+1})$-module, where we extend the action to make $V$ become a $U_q=U_q(\mathfrak{gl}_{n+1})$-module as in Theorem I. Consider the $A_{n+1}^q$-module 
\[\widetilde{W}=A_{n+1}^q\otimes_{U_q} V\]
where $A_{n+1}^q$ is regarded as a right $U_q$-module via the homomorphism $\pi:U_q\to A_{n+1}^q$ defined in \eqref{eq:pi_map}. 
Since $A_{n+1}^q=\bigoplus_{m\in\mathbb{Z}} A_{n+1}^q[m]$ as right $A_{n+1}^q[0]$-modules, we have
\begin{equation}
\widetilde{W}=\bigoplus_{m\in\mathbb{Z}}\widetilde{W}[m],\qquad \widetilde{W}[m]= A_n^q[m]\otimes_{U_q} V.
\end{equation}
Since $V$ is completely pointed we have $V=\bigoplus_{\lambda\in(U_q^0)^\ast} V_\lambda$, $V_\lambda=\{v\in V\mid kv=\lambda(k)v,\forall k\in U_q^0\}$, where $(U_q^0)^\ast$ is the set of characters of $U_q^0=\mathbb{F}[K_1^{\pm 1},\ldots, K_{n+1}^{\pm 1}]$. Since $I_{n+1}:=\bar K_1\cdots \bar K_{n+1}$ is central in $U_q$ and $V$ is irreducible, it follows that $I_{n+1}$ acts by some scalar $\xi\in\mathbb{F}\setminus\{0\}$ on $V$. Thus $\mathbb{E}_q a_m\otimes_{U_q} v_\lambda = q^m a_m\mathbb{E}_q\otimes_{U_q} v_\lambda = q^ma_m\otimes_{U_q} I_nv_\lambda=\xi q^m a_m\otimes_{U_q} v_\lambda$ for $m\in\mathbb{Z}$, $a_m\in A_{n+1}^q[m]$, $\lambda\in (U_q^0)^\ast$ and $v_\lambda\in V_\lambda$.
Thus
\begin{equation}
A_{n+1}^q[m]\otimes V=\big\{w\in\widetilde W\mid \mathbb{E}_q w= \xi q^m w\big\}.
\end{equation}
This turns $\widetilde{W}$ into a $\mathbb{Z}$-graded $A_{n+1}^q$-module: $A_{n+1}^q[m_1]\widetilde{W}[m_2]\subseteq \widetilde{W}[m_1+m_2]$ for all $m_1,m_2\in\mathbb{Z}$. In particular $\widetilde{W}[0]$ is left $A_{n+1}^q[0]$-submodule of $\widetilde{W}$, and can thus be regarded as a $U_q$-module via the map \eqref{eq:pi_map}. By \eqref{eq:qEuler4} and Lemma \ref{lem:kerpi_AnnV}, there is a linear map
\begin{equation}\label{eq:VisoW0}
\begin{aligned}
A_{n+1}^q[0]\otimes_{\mathbb{F}} V &\longrightarrow V, \\ 
\pi(a)\otimes v &\longmapsto av.
\end{aligned}
\end{equation}
It is balanced with respect to the right and left $U_q$-actions and hence induces a homomorphism $\widetilde{W}[0]\to V$, which is an isomorphism of $U_q$-modules with inverse $v\mapsto 1\otimes_{U_q} v$.
Let $N$ be the sum of all $\mathbb{Z}$-graded submodules $S$ of $\widetilde{W}$ such that $S\cap \widetilde{W}[0]=\{0\}$. Then $N$ is the unique maximal $\mathbb{Z}$-graded submodule of $\widetilde{W}$ with $N\cap \widetilde{W}[0]=\{0\}$. Define
\begin{equation}
W=\widetilde{W}/N.
\end{equation}
Since $N$ is graded, so is $W$, and \eqref{eq:VisoW0} and $N\cap\widetilde{W}[0]=\{0\}$ imply that $V$ is isomorphic to $W[0]$, which is a direct summand of $W$ as a left $A_{n+1}^q[0]$-module, hence as a left $U_q$-module via \eqref{eq:pi_map}.

We show that $W$ is an irreducible $A_{n+1}^q$-module. If $M$ is a nonzero submodule of $W$, then the inverse image, $\widetilde{M}$, of $M$ under the canonical projection $\widetilde{W}\to W$ is a graded nonzero module containing $N$ and thus must have nonzero intersection with $\widetilde{W}[0]$ by maximality of $N$. Since $V$ is an irreducible $U_q$-module, $\widetilde{W}[0]$ is an irreducible $A_n^q[0]$-module. Therefore
\[\widetilde{M}\supseteq A_{n+1}^q[0]\widetilde{M}\supseteq A_{n+1}^q[0]\widetilde{W}[0]=\widetilde{W}[0].\]
But $\widetilde{W}$ is generated as a left $A_{n+1}^q$-module by $\widetilde{W}[0]$ which implies that $\widetilde{M}=\widetilde{W}$ and thus $M=W$. 

It remains to be proved that $W$ is completely pointed.
Let $\lambda$ be a character of $V$ such that $V_\lambda\neq\{0\}$ and let $v_\lambda\in V_\lambda\setminus\{0\}$. Define $\mathfrak{m}=(\omega_1-\lambda(\bar K_1),\ldots,\omega_{n+1}-\lambda(\bar K_{n+1}))\in\mathrm{Specm}^1(R)$. Then the vector $(1\otimes_{U_q} v_\lambda)+N$ is a nonzero $R$-weight vector of weight $\mathfrak{m}$, so by Lemma \ref{lem:Anq_cptd}, $W$ is a completely pointed $A_{n+1}^q$-module.
\end{proof}

\section{Acknowledgment}
	The first author is supported in part by the CNPq grant (301320/2013-6) and by the Fapesp grant (2010/50347-9). The second author would like to thank Dennis Hasselstrom Pedersen for interesting discussions. The third author is grateful to the University of S\~{a}o Paulo for hospitality, his supervisor Vyacheslav Futorny, and to the Fapesp for financial support (grant number 2011/12079-5).

\end{document}